\newcommand{\cE}{\ensuremath{\mathcal E}}
\newcommand{\cP}{\ensuremath{\mathcal P}}
\newcommand{\ATYP}{\ensuremath{\mathrm{ATYP}}}
\newcommand{\TINY}{\ensuremath{\mathrm{TINY}}}
\newcommand{\eps}{\varepsilon}
\renewcommand{\phi}{\varphi}
\DeclareMathOperator*{\E}{\mathbb{E}}
\DeclareMathOperator*{\N}{\mathbb{N}}
\DeclarePairedDelimiter\floor{\lfloor}{\rfloor}
\newcommand{\Gnp}{G_{n, p}}
\newcommand{\Gnm}{G_{n, m}}
\definecolor{royalazure}{rgb}{0.0, 0.22, 0.66}
\declaretheorem[parent=section]{theorem}
\declaretheorem[sibling=theorem]{lemma}
\declaretheorem[sibling=theorem]{proposition}
\declaretheorem[sibling=theorem]{claim}
\declaretheorem[sibling=theorem,style=definition]{definition}
\declaretheorem[parent=section,style=remark]{remark}
\setlist{itemsep=0.1em, topsep=0.1em}
\def\thm@space@setup{%
  \thm@preskip=0.5em \thm@postskip=0.2em
}
\title{Resilience of Perfect Matchings and Hamiltonicity in Random Graph Processes}
\author{
  Rajko Nenadov\thanks{School of Mathematical Sciences, Monash University, VIC 3800, Australia \newline Email: \texttt{rajko.nenadov@monash.edu}}
  \and
  Angelika Steger\thanks{Institute of Theoretical Computer Science, ETH Z\"{u}rich, 8092 Z\"{u}rich, Switzerland \newline Email: \{\texttt{steger}\textbar \texttt{mtrujic}\}\texttt{@inf.ethz.ch}}
  \and
  Milo\v{s} Truji\'{c}\footnotemark[2] \textsuperscript{,}\thanks{author was supported by grant no.\ 200021 169242 of the Swiss National Science Foundation.}
}
\date{}
\begin{document}
\maketitle

\begin{abstract}
  Let $\{G_i\}$ be the random graph process: starting with an empty graph $G_0$
  with $n$ vertices, in every step $i \geq 1$ the graph $G_i$ is formed by
  taking an edge chosen uniformly at random among the non-existing ones and
  adding it to the graph $G_{i - 1}$. The classical `hitting-time' result of
  Ajtai, Koml\'{o}s, and Szemer\'{e}di, and independently Bollob\'{a}s, states
  that asymptotically almost surely the graph becomes Hamiltonian as soon as the
  minimum degree reaches~$2$, that is if $\delta(G_i) \ge 2$ then $G_i$ is
  Hamiltonian. We establish a resilience version of this result.  In particular,
  we show that the random graph process almost surely creates a sequence of
  graphs such that for $m \geq (\tfrac{1}{6} + o(1))n\log n$ edges, the $2$-core
  of the graph $G_m$ remains Hamiltonian even after an adversary removes
  $(\tfrac{1}{2} - o(1))$-fraction of the edges incident to every vertex. A
  similar result is obtained for perfect matchings.
\end{abstract}

\section{Introduction}

The theory of random graphs originated in 1959 with the two seminal papers by Erd\H{o}s and R\'enyi~\cite{erdds1959random} and Gilbert~\cite{gilbert1959random}. It is now a well-established research area with many applications in theoretical computer science, statistical physics, and other branches of mathematics, cf.~\cite{bollobas1998random, FK15, JLR11}. Typical questions in random graph theory traditionally concern the \emph{existence} of certain (sub)structures. A more recent trend, introduced by Sudakov and Vu~\cite{SV08}, is the study of their \emph{resilience} properties. Formally, resilience of a graph~$G$ with respect to some property~${\cal P}$ is defined as follows:
\begin{definition}
  Let $G = (V, E)$ be a graph. We say that $G$ is \emph{$\alpha$-resilient} with respect to the property $\mathcal{P}$, for some $\alpha \in [0,1]$, if for every spanning subgraph $H \subseteq G$ such that $\deg_H(v) \le \alpha \deg_G(v)$ for every $v \in V$, we have $G - H \in \mathcal{P}$.
\end{definition}

In other words, $\alpha$-resilience implies that the property~${\cal P}$ cannot be destroyed even if an adversary is allowed to remove an (arbitrary) $\alpha$-fraction of all edges incident to each vertex. For instance, Dirac's celebrated theorem~\cite{Dir52} states that $K_n$ is $(1/2)$-resilient with respect to containing a Hamilton cycle. The study of resilience of complete graphs is one of the central topics in extremal combinatorics (see, e.g.~\cite{bottcher2009proof, hajnal1970proof, komlos1998posa, kuhn2009minimum} for some of the cornerstones).

In this paper we show that two seminal results of random graph theory hold in a resilient fashion. More precisely, we consider two famous `hitting-time' results---one by Erd\H{o}s and R\'enyi~\cite{erdos66matching} and Bollob\'as and Thomason \cite{bollobas1985random} regarding perfect matchings, and one by Ajtai, Koml\'os, and Szemer\'edi~\cite{ajtai1985first} and, independently, Bollob\'as~\cite{bollobas84evolutionhitting} regarding Hamiltonicity. Let $n \in \mathbb{N}$ and consider the following process: set $G_0$ to be an empty graph with $n$ vertices and for each $i \in \{1, \ldots, \binom{n}{2}\}$ form the graph $G_i$ by choosing an edge $e_i \notin G_{i-1}$ uniformly at random and adding it to $G_{i-1}$. In this way we obtain a sequence of nested graphs $\{G_i\}_{i = 0}^N$ for $N = \binom{n}{2}$, where $G_0$ is an empty graph and $G_N$ is a complete graph. It is an easy exercise to show that for every $m \in \{1, \ldots, N\}$ the graph $G_m$ has the same distribution as the Erd\H{o}s-R\'enyi random graph $G_{n,m}$, a graph chosen uniformly at random among all labelled graphs with $n$ vertices and exactly $m$ edges. A necessary condition for a graph to contain a perfect matching is that the minimum degree is at least 1 (assuming that $n$ is even). Similarly, having minimum degree at least $2$ is a necessary condition to be Hamiltonian. The aforementioned results show that, perhaps surprisingly, these conditions are also sufficient: as soon as $\delta(G_m) \ge 1$ the graph $G_m$ contains a perfect matching, and as soon as $\delta(G_m) \ge 2$ the graph contains a Hamilton cycle, both {asymptotically almost surely}\footnote{An event is said to hold asymptotically almost surely (a.a.s.\ for short) if the probability that it holds approaches $1$ as $n \to \infty$.}. This kind of results are usually referred to as \emph{hitting-time} results.

In this paper we show that, in fact, as soon as the minimum degree reaches $1$ the graph does not only contain a perfect matching, but it does so {\em robustly}---meaning it is $(1/2-o(1))$-resilient with respect to this property. Similarly, as soon as the minimum degree reaches $2$ the graph $G_m$ becomes robustly Hamiltonian, that is $(1/2-o(1))$-resilient with respect to being Hamiltonian. To see why the parameter $1/2$ is asymptotically best possible, first recall that every graph $G$ contains a partition $V_1 \cup V_2$ such that every vertex in one part has at least half of its neighbours in the other. In the case when such a partition is not balanced, deleting all the edges within $V_1$ and $V_2$ clearly prevents the resulting graph from having either a perfect matching or a Hamilton cycle. Otherwise, both parts have size $n/2$ and it is an easy exercise to show that if $G$ is a random graph with $m \gg n$ edges, which will be the case here, at least one vertex in $V_1$ also has roughly half of its neighbours in $V_1$. Moving such a vertex to $V_2$ results in an unbalanced partition, and the rest of the argument is the same as before.

Even though the study of resilience properties of random graphs has attracted considerable attention over the last years, cf.\ e.g.~\cite{balogh2011local,bottcher2013almost,huang2012bandwidth,krivelevich2010resilient} and the recent survey~\cite{sudakov2017robustness}, this is the first result which determines precisely when the random graph process becomes resilient with respect to some well-studied property.

\subsection{Results}
The classical hitting-time result for perfect matchings by Bollob\'as and Thomason \cite{bollobas1985random}, and a previously obtained result by Erd\H{o}s and R\'enyi~\cite{erdos66matching} which establishes the correspondence between minimum degree one and perfect matchings in a slightly weaker form, follow quite easily from well-known sufficient conditions for the existence of perfect matchings. On the other hand, the hitting time result for Hamiltonicity by Ajtai, Koml\'os, and Szemer\'edi~\cite{ajtai1985first} and Bollob\'as~\cite{bollobas84evolutionhitting} is significantly more intricate and builds on a result of P\'osa~\cite{posa1976hamiltonian} and subsequent refinements.

The situation is similar for our results. While Hamiltonicity is more interesting and challenging (and our main result), we use the case of perfect matchings to demonstrate some of our main ideas and the general approach.

\paragraph{Perfect matchings.}
Our first result shows a bit more than discussed earlier: as long as the number of added edges is not too small, removing isolated vertices, an obvious obstacle in obtaining a perfect matching, results in a graph which is resilient with respect to containing a perfect matching\footnote{In order to avoid having to distinguish between an odd and an even number of vertices of the given graph, we use the term `perfect matching' to indicate that the graph contains a matching that covers all but at most one of its vertices.}.
\begin{theorem} \label{thm:main-thm-pm}
  Let $\eps > 0$ be a constant and consider the random graph process $\{G_i\}$. Then a.a.s.\ for every $m \ge \frac{1 + \eps}{4} n \log n$ we have that the graph obtained from $G_m$ by deleting all isolated vertices is $(1/2 - \eps)$-resilient with respect to containing a perfect matching.
\end{theorem}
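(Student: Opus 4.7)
Fix any adversarial $H\subseteq G':=G_m$ minus its isolated vertices, satisfying $\deg_H(v)\le(1/2-\varepsilon)\deg_{G'}(v)$ for every $v$, and write $G^\star:=G'-H$. The goal is to show $G^\star$ contains an (almost-)perfect matching. The strategy has two parts: (i) build a partial matching $M_0\subseteq G^\star$ saturating all low-degree vertices, using their neighborhoods in $G^\star$ directly; (ii) show that on the remaining vertex set, $G^\star$ satisfies a Hall-type expansion condition that produces a perfect matching via an augmenting-path argument. The expansion claim is proved a.a.s.\ over $G_m$ uniformly in the choice of adversary.

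\textbf{Reserving a matching for low-degree vertices.} Set $d_0:=\gamma\varepsilon\log n$ for a sufficiently small $\gamma=\gamma(\varepsilon)>0$ and let $V_\mathrm{sm}:=\{v:\deg_{G_m}(v)\le d_0\}$. Standard first/second moment computations on $G_{n,m}$ with $m\ge\tfrac{1+\varepsilon}{4}n\log n$ yield a.a.s.\ that $|V_\mathrm{sm}|=n^{1-\Omega_\varepsilon(1)}$ and that any two distinct vertices of $V_\mathrm{sm}$ are at $G_m$-distance at least three. For every non-isolated $v\in V_\mathrm{sm}$ we then have $\deg_{G^\star}(v)\ge\lceil(1/2+\varepsilon)\deg_{G'}(v)\rceil\ge 1$, and the distance property guarantees that all $G^\star$-neighbors of such $v$ lie in $V\setminus V_\mathrm{sm}$ and are disjoint for distinct $v$'s. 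A greedy procedure then produces the desired matching $M_0$.

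\textbf{Expansion and a Hall-type condition.} Let $U$ be the set of non-isolated vertices of $G_m$ not saturated by $M_0$. The core technical claim is that a.a.s.\ over $G_m$, for every valid adversary $H$, every $S\subseteq U$ with $|S|\le|U|/2$ satisfies $|N_{G^\star[U]}(S)|\ge|S|$. This Hall-type condition, together with $\delta(G^\star[U])\ge 1$, is enough to produce a perfect matching on $U$ by a standard augmenting-path argument; combined with $M_0$ it gives the desired matching of $G^\star$. To prove the expansion claim without enumerating $H$, fix a pair $(S,T)$ with $|T|<|S|\le n/2$: the bad event ``$N_{G^\star[U]}(S)\subseteq T$'' means that every $G_m$-edge from $S$ into $V\setminus(S\cup T\cup V_\mathrm{sm})$ must be deleted by $H$, so their total number is at most $(1/2-\varepsilon)\sum_{v\in S}\deg_{G_m}(v)$. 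A Chernoff bound on $e_{G_m}(S,V\setminus(S\cup T\cup V_\mathrm{sm}))$ combined with a union bound over $4^n$ pairs $(S,T)$ closes the case for moderate and large $|S|$.

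\textbf{Main obstacle.} The delicate regime is small $|S|$ (roughly $|S|\le \log^{O(1)} n$), where Chernoff concentration around the mean $\sim|S|\log n$ is too weak to beat the enumeration cost $\binom{n}{|S|}^2$. Here one cannot afford a naive probabilistic bound and must instead exploit structural properties that hold deterministically a.a.s.\ in $G_m$ — typically a ``no small dense subgraph'' statement of the form ``no $k$ vertices span more than $(1+o(1))k$ edges'' in the range where $k$ is small, together with the disjoint-neighborhood property of $V_\mathrm{sm}$ — to rule out small poorly expanding sets directly, without appealing to concentration. Getting the small-$|S|$ structural argument to dovetail cleanly with the Chernoff regime, while keeping the density threshold down to $\tfrac{1+\varepsilon}{4}n\log n$, is the main technical work of the proof.
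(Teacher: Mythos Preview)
Your outline captures the right high-level picture---cover low-degree vertices first, then verify an expansion/Hall condition on the rest---but it misses the main technical difficulty that drives the paper's proof, namely the quantifier ``for every $m$''.

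\textbf{The uniformity gap.} The theorem is a statement about the whole random graph process: a.a.s.\ the conclusion holds \emph{simultaneously} for every $m\ge\tfrac{1+\eps}{4}n\log n$. Your argument is written for a fixed $m$. Several of your ingredients---the claim that two vertices of $V_{\mathrm{sm}}$ are at distance at least three, the ``no small dense subgraph'' property, the bound $|V_{\mathrm{sm}}|=n^{1-\Omega_\eps(1)}$---are established in $G_{n,m}$ with failure probability that is only polynomially small (typically $n^{-\Theta(\eps)}$). A union bound over the $\Theta(n\log n)$ relevant values of $m$ therefore does not close. Nor are these properties monotone in $m$: a vertex can cease to be in $V_{\mathrm{sm}}$ and then re-enter it is not possible, but distances between small vertices and which vertices are small both change as edges arrive, so you cannot simply anchor at one value of $m$.

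The paper handles this by a sandwich/bundling trick (Proposition~\ref{prop:fixed-range-pm}): it samples $G^{-}\sim G_{n,p_0}$ and $G^{+}=G^{-}\cup G_{n,p'}$ so that a.a.s.\ $G^{-}\subseteq G_m\subseteq G^{+}$ for all $m$ in an interval of length $\Theta(\eps n\log n)$, defines $\TINY$ and $\ATYP$ once using $G^{-}$ and $G^{+}$, and proves the structural lemmas (Lemma~\ref{lem:bad-subgraphs}) for this pair. One application gives the result for a whole interval; a union bound over $O_\eps(1)$ intervals covers $m$ up to $Cn\log n$, and the range $m\ge Cn\log n$ is handled by Sudakov--Vu with failure probability small enough to sum. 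Your proposal has no analogue of this step.

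\textbf{A secondary issue.} Your ``Hall-type condition plus $\delta\ge1$ implies a perfect matching by a standard augmenting-path argument'' is not, as stated, a standard fact for non-bipartite graphs; at the very least it requires an appeal to Tutte's theorem and a short argument bounding odd components via the expansion hypothesis. The paper sidesteps this entirely by taking a random balanced bipartition of the leftover vertex set, greedily absorbing the few vertices whose degree splits badly (Claim~\ref{cl:no-small-pm-deg-tree}), rebalancing with a $2$-independent set, and then invoking the genuine bipartite Hall criterion. This also eliminates the small-$|S|$ obstacle you flag, because after the bipartition every surviving vertex has degree at least $(1/4+\Omega(\eps))np$ into the opposite side, and the edge-distribution property (Lemma~\ref{lem:gnp-edge-distribution}) handles all set sizes uniformly.
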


This improves a result of Sudakov and Vu~\cite{SV08} who showed that a random graph $G_{n,m}$ is $(1/2 - \eps)$-resilient with respect to containing a perfect matching if $m \geq C n \log n$, for some large constant $C(\eps) > 100$. Note that the bound on $m$ in Theorem~\ref{thm:main-thm-pm} is asymptotically optimal. The reason being that for $m = \frac{1 - \eps}{4} n \log n$ there exist many \emph{cherries} (see, e.g.~\cite{bollobas1985random}), pairs of vertices of degree $1$ which have a common neighbour. Clearly, for each cherry only one vertex can be part of a matching and a perfect matching thus cannot exist. The standard non-resilience version of Theorem \ref{thm:main-thm-pm} with somewhat more precise lower bound on $m$ was established in~\cite{bollobas1985random}.

In fact, we obtain a slightly stronger resilience statement. That is, even if we allow the adversary to delete a graph $H$ that contains almost all the edges incident to `atypical' vertices (vertices whose degree deviates significantly from the average degree), the resulting graph still contains a perfect matching. The precise statement can be found in Section \ref{sec:matching}.

Theorem \ref{thm:main-thm-pm} also immediately implies a resilience version of the hitting-time result for perfect matchings.
\begin{theorem}
  Let $\eps > 0$ be a constant. Consider the random graph process $\{G_i\}$ with an even number of vertices $n$ and let $m_1 = \min \{m \colon \delta(G_m) = 1\}$ denote the step in which the last isolated vertex disappears. Then a.a.s.\ we have that $G_{m_1}$ is $(1/2 - \eps)$-resilient with respect to containing a perfect matching.
\end{theorem}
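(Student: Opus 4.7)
The plan is to deduce this corollary directly from Theorem~\ref{thm:main-thm-pm}. By the very definition of $m_1$, the graph $G_{m_1}$ has no isolated vertices, so ``delete isolated vertices from $G_{m_1}$'' is the identity operation. Hence it is enough to show that a.a.s.\ $m_1 \ge \tfrac{1 + \varepsilon/2}{4} n \log n$, and then invoke Theorem~\ref{thm:main-thm-pm} with $\varepsilon/2$ in place of $\varepsilon$; crucially, the conclusion of that theorem is \emph{simultaneous} for all $m$ above the stated threshold, so it can be applied to the random index $m_1$.

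To bound $m_1$ from below, set $m^{\ast} = \lfloor \tfrac{1 + \varepsilon/2}{4} n \log n \rfloor$ and consider $G_{m^{\ast}} \sim G_{n,m^{\ast}}$. Let $Z$ denote the number of isolated vertices in $G_{m^{\ast}}$. A standard computation gives
\[
  \mathbb{E}[Z] = n \cdot \frac{\binom{\binom{n}{2} - (n - 1)}{m^{\ast}}}{\binom{\binom{n}{2}}{m^{\ast}}} = (1 + o(1)) \, n \, e^{-(1 + \varepsilon/2) \log n / 2} = n^{(1 - \varepsilon/2)/2 + o(1)} \to \infty.
\]
Using that indicators of isolatedness are (weakly) negatively correlated in $G_{n,m}$, the variance of $Z$ is at most of order $\mathbb{E}[Z]$, so by Chebyshev's inequality $Z \ge 1$ a.a.s.; equivalently, $m_1 > m^{\ast}$ a.a.s. (One can also pass to $G_{n,p}$ with $p = m^{\ast}/\binom{n}{2}$ via the standard coupling to avoid even this mild calculation.)

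Combining the two a.a.s.\ events---the universal resilience statement of Theorem~\ref{thm:main-thm-pm} applied with $\varepsilon/2$, and $m_1 > m^{\ast}$---yields that $G_{m_1}$, which equals its own 1-core, is $(1/2 - \varepsilon/2)$-resilient with respect to containing a perfect matching, and in particular $(1/2 - \varepsilon)$-resilient, as desired.

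I do not foresee any real obstacle here: the entire argument is a packaging of Theorem~\ref{thm:main-thm-pm} together with the classical fact that isolated vertices survive well past $\tfrac{1}{4} n \log n$ edges. The only subtlety worth flagging is that Theorem~\ref{thm:main-thm-pm} is stated uniformly over all $m$ above the threshold; without this uniformity, transferring a statement from deterministic $m$ to the random stopping time $m_1$ would require extra work.
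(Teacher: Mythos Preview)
Your proposal is correct and matches the paper's approach: the paper states without further proof that this theorem ``immediately'' follows from Theorem~\ref{thm:main-thm-pm}, and you have correctly supplied the two ingredients this requires---that $m_1$ a.a.s.\ exceeds the threshold $\tfrac{1+\eps/2}{4}n\log n$ (via the classical second-moment argument on isolated vertices), and that the conclusion of Theorem~\ref{thm:main-thm-pm} is uniform over all $m$ above the threshold so it applies at the random time $m_1$.
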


\paragraph{Hamiltonicity.}
Prior to proving that the point when $\delta(G_m)$ becomes $2$ coincides with the point when $G_m$ is Hamiltonian, Koml\'os and Szemer\'edi \cite{komlos1983limit} showed that if $m = \frac{n}{2} (\log n + \log\log n + c_n)$ then $G_{n,m}$ contains a Hamilton cycle if $c_n \to \infty$. On the other hand, if $c_n \to -\infty$ then it is a.a.s.\ not Hamiltonian (see \cite{komlos1983limit}), precisely because of the existence of a vertex with degree less than $2$. However, something can still be said for such a sparse graph, or rather its \emph{$2$-core}. Given a graph $G$ we define the \emph{$2$-core} of $G$ to be the graph obtained from $G$ by successively removing vertices of degree at most $1$. In other words, the $2$-core of a graph $G$ is its largest subgraph which satisfies the necessary minimum degree condition for being Hamiltonian. \L uczak~\cite{luczak1987matchings} showed that, in the case of a random graph $G_{n, m}$, such subgraphs are indeed Hamiltonian as long as $m \ge \tfrac{1 + \eps}{6} n\log n$. Here we extend this by showing that, starting from that point, the $2$-core of every graph $G_m$ is resilient with respect to containing a Hamilton cycle. Moreover, the bound on $m$ is asymptotically optimal, as a.a.s.\ the $2$-core of $G_{n, m}$ is not Hamiltonian for $m \le \tfrac{1 - \eps}{6} n \log n$ (see~\cite{luczak1987matchings}).
\begin{theorem} \label{thm:main-thm-ham}
  Let $\eps > 0$ be a constant and consider the random graph process $\{G_i\}$. Then a.a.s.\ for every $m \ge \frac{1 + \eps}{6} n \log n$ we have that the $2$-core of $G_m$ is $(1/2 - \eps)$-resilient with respect to being Hamiltonian.
\end{theorem}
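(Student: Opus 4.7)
The plan is to extend the strategy of Theorem~\ref{thm:main-thm-pm} by combining a two-round exposure with P\'osa's rotation-extension technique, with the $2$-core now playing the role that ``removing isolated vertices'' played for perfect matchings. Let $C$ be the $2$-core of $G_m$; we must show that $C - H$ is Hamiltonian for every $H \subseteq C$ with $\deg_H(v) \le (1/2 - \eps)\deg_C(v)$. Write $m = m_1 + m_2$ with $m_1 = \lceil \tfrac{1 + \eps/2}{6} n \log n \rceil$, and expose $G_{m_1}$ first, then reveal $m_2$ further random ``booster'' edges afterwards. It then suffices to prove (i) for every admissible $H$, the graph $C - H$ satisfies P\'osa's expansion: every $S \subseteq V(C)$ with $|S| \le n/4$ has $|N_{C-H}(S)| \ge 2|S|$; and (ii) conditional on (i), enough booster edges from $G_m \setminus G_{m_1}$ survive the adversary to close things into a Hamilton cycle.

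Step (ii) is the classical rotation-extension plus sprinkling finish: any longest path in a graph satisfying (i) admits $\Omega(n^2)$ ``boosting'' pairs of vertices whose addition would either extend the path or close it into a Hamilton cycle, so each random edge not deleted by $H$ succeeds with probability $\Omega(1)$, and $O(\log n)$ such sprinkling rounds suffice. Since the adversary may only remove a $(1/2 - \eps)$-fraction of the degree at each vertex, at least a $(1/2 + \eps)$-fraction of the $G_2$-edges survive, which is enough.

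The main obstacle is proving (i). We classify vertices of $C$ by their degree in $G_{m_1}$ relative to the average $D \approx \tfrac{1 + \eps/2}{3} \log n$: call $v$ \emph{typical} if $\deg_{G_{m_1}}(v) \ge \eps D$, \emph{tiny} if $\deg_{G_{m_1}}(v) \le 1/\eps^2$, and \emph{atypical} otherwise. Typical vertices account for all but $o(n)$ of $C$, and for a set $S$ dominated by typical vertices the expansion $|N(S)| \ge 2|S|$ follows from a standard Chernoff/union-bound argument on the number of $G_{m_1}$-edges between $S$ and $V \setminus S$, combined with the $(1/2 - \eps)$ deletion bound. The delicate case --- and the heart of the proof --- is when $S$ contains many low-degree (atypical or tiny) vertices: each such $v$ supplies only a handful of neighbors, and its adversarial deletion budget $\lfloor (1/2 - \eps)\deg_C(v) \rfloor$ is small in absolute terms, in particular $0$ for $\deg = 2$ and $1$ for $\deg \in \{3,4\}$. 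Here we invoke a structural lemma of \L{}uczak type~\cite{luczak1987matchings}: a.a.s.\ in $G_{m_1}$ no two low-degree vertices share a common neighbor, and short-range neighborhoods around distinct low-degree vertices are essentially disjoint. Consequently, low-degree vertices in $S$ contribute almost disjoint neighbors to $N(S)$, and the small deletion budget cannot destroy the $2|S|$ expansion.

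To make the argument uniform over \emph{all} admissible $H$ simultaneously (rather than conditioning on a fixed $H$), we replace (i) by its edge-counting surrogate: a deterministic lower bound on the number of $G_{m_1}$-edges between $S$ and its complement, from which the adversary can remove at most a $(1/2 - \eps)$-fraction by hypothesis. This is the standard ``deterministic expansion'' reduction used throughout the resilience literature, and it allows a union bound over choices of $S$ rather than $H$. Combining the resulting P\'osa expansion with the sprinkling argument of step (ii) yields Theorem~\ref{thm:main-thm-ham}.
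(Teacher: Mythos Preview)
Your proposal has two genuine gaps, both in step~(ii).

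\textbf{Sprinkling does not survive the adversary.} You split $m = m_1 + m_2$ with $m_1 = \lceil\tfrac{1+\eps/2}{6}n\log n\rceil$ and treat the $m_2$ extra edges as ``sprinkled'' boosters. But the adversary sees all of $G_m$ before choosing $H$, and for $m$ close to the threshold $\tfrac{1+\eps}{6}n\log n$ one has $m_2/m \approx \tfrac{\eps/2}{1+\eps} < \tfrac12-\eps$. Hence at every typical vertex $v$ the adversary's budget $(1/2-\eps)\deg_{G_m}(v)$ exceeds $\deg_{G_{m_2}}(v)$, so she may simply delete \emph{every} sprinkled edge. The sentence ``each random edge not deleted by $H$ succeeds with probability $\Omega(1)$'' is therefore meaningless: the surviving edges are adversarially selected, not random, and in the worst case none of them come from the sprinkled round.

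\textbf{P\'osa expansion $|N(S)|\ge 2|S|$ is too weak for $(1/2-\eps)$-resilience.} Classical P\'osa rotation under your condition~(i) produces an endpoint set of size roughly $n/4$, so each endpoint $v$ has at most about $n/4$ booster partners. Consequently only about a quarter of $v$'s edges in $G_m$ go to boosters, and the adversary---who may delete almost half of them---can kill every booster edge at $v$. What is needed (and what the paper proves, following Lee--Sudakov) is the stronger \emph{half-expansion} $|N_\Gamma(S)| \ge (1/2+\alpha)n$ once $|S|$ is moderately large; this yields $(1/2+\alpha)n$ vertices each with $(1/2+\alpha)n$ booster partners, so that for some such $v$ one has $|N_G(v)\cap B_\Gamma(v)| > np/2 > \deg_H(v)$ and a booster edge must survive.

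The paper's fix for both issues is to abandon sprinkling altogether. Instead it passes to a \emph{sparse} random subgraph $G_q \subseteq G^-$ with $q = \Theta(\mu p)$, shows that $\Gamma = G_q - (H\cap G_q)$ (after contracting short paths between tiny vertices) is a backbone graph with the half-expansion property, and then---crucially---takes a union bound over \emph{all} graphs $\Gamma$ with at most $\mu n^2 p$ edges (Lemma~\ref{lem:extension}) to conclude that $G^-$ itself contains a surviving booster edge. This union bound over sparse backbones is precisely the mechanism your proposal lacks; without it there is no way to decouple ``where the boosters lie'' from ``what the adversary deletes''.
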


Similarly to the case of perfect matchings we actually prove a slightly stronger resilience result that allows deletion of almost all edges incident to `atypical' vertices, cf.\ Section~\ref{sec:HAM} and Theorem~\ref{thm:stronger-resilience-ham} for details.

The question of resilience of random graphs with respect to Hamiltonicity has attracted considerable attention in the last years. Sudakov and Vu \cite{SV08} showed that $G_{n,m}$ is $(1/2 - o(1))$-resilient with respect to Hamiltonicity for $m \ge n \log^4 n$, and conjectured that already $m \gg n \log n$ should suffice. The lower bound on $m$ was gradually improved in a series of paper: Frieze and Krivelevich \cite{frieze2008two} showed that $\Gnm$ is $\alpha$-resilient for $p \ge K n \log n$, for some small $\alpha > 0$ and sufficiently large $K$; Ben-Shimon, Krivelevich, and Sudakov~\cite{BKS11} showed $(1/6 - o(1))$-resilience and soon after improved it to $(1/3 - o(1))$-resilience~\cite{ben2011resilience} for $m \ge \tfrac{1 + \eps}{2}n \log n$; finally, Lee and Sudakov \cite{LS12} showed the optimal $(1/2 - o(1))$-resilience for $m \ge K n \log n$.

While the last result settles the original conjecture of Sudakov and Vu, the gap between the value of $m$ for which $\Gnm$ is typically Hamiltonian and for which we know is $(1/2 - o(1))$-resilient with respect to Hamiltonicity remains. A simple corollary of Theorem \ref{thm:main-thm-ham} closes this gap by showing that the two coincide. More precisely, we obtain that as soon as $\delta(G_m) \ge 2$ the graph $G_m$ is resilient with respect to Hamiltonicity. This provides a resilience version of the hitting-time result of Ajtai, Koml\'os, and Szemer\'edi~\cite{ajtai1985first} and Bollob\'as~\cite{bollobas84evolutionhitting}.
\begin{theorem} \label{thm:hitting_time_HAM}
  Let $\eps > 0$ be a constant. Consider the random graph process $\{G_i\}$ and let $m_2 = \min \{m \colon \delta(G_m) = 2\}$ denote the step in which the last vertex of degree at most one disappears. Then a.a.s.\ we have that $G_{m_2}$ is $(1/2 - \eps)$-resilient with respect to being Hamiltonian.
\end{theorem}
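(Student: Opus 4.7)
The plan is to derive \textbf{Theorem~\ref{thm:hitting_time_HAM}} as a direct corollary of \textbf{Theorem~\ref{thm:main-thm-ham}}. The crucial observation is that at step $m_2$ the random graph $G_{m_2}$ satisfies $\delta(G_{m_2}) \ge 2$ by the very definition of $m_2$, and since the $2$-core of a graph $G$ is its largest subgraph with minimum degree at least $2$, the $2$-core of $G_{m_2}$ coincides with $G_{m_2}$ itself. Consequently, once one knows that $m_2$ lies a.a.s.\ in the range $m \ge \frac{1+\eps}{6} n\log n$ to which Theorem~\ref{thm:main-thm-ham} applies, the resilience of the $2$-core of $G_{m_2}$ is the same thing as the resilience of $G_{m_2}$, and the conclusion follows.

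The only quantitative ingredient to verify is that a.a.s.\ $m_2 > \frac{1+\eps}{6} n \log n$. Setting $m_0 = \lceil \frac{1+\eps}{6} n \log n \rceil$, the graph $G_{m_0}$ is distributed as $\Gnm$ with $m = m_0$, which behaves essentially like $\Gnp$ with $p \sim \frac{(1+\eps)\log n}{3n}$. The expected number of vertices of degree at most~$1$ in this graph is
\[
  n\bigl((1-p)^{n-1} + (n-1)\, p\, (1-p)^{n-2}\bigr) = \Theta\!\bigl(n^{(2-\eps)/3} \log n\bigr),
\]
which tends to infinity as $n \to \infty$. A standard second-moment argument (the degrees of two fixed vertices are nearly independent at this edge density, so the variance of this count is of the same order as its mean) shows that it is a.a.s.\ positive, so $\delta(G_{m_0}) \le 1$ and therefore $m_2 > m_0$. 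In fact, the hitting time $m_2$ is known to be concentrated around $\tfrac{n}{2}(\log n + \log\log n)$, which is far above $m_0$.

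Combining (i) the a.a.s.\ event from Theorem~\ref{thm:main-thm-ham} that every $G_m$ with $m \ge \tfrac{1+\eps}{6} n\log n$ has a $(1/2-\eps)$-resilient $2$-core, with (ii) the a.a.s.\ event $m_2 > m_0$, the theorem follows by applying (i) at $m = m_2$ and using that the $2$-core of $G_{m_2}$ equals $G_{m_2}$. The only substantial ingredient is Theorem~\ref{thm:main-thm-ham} itself; beyond that, no real obstacle remains, and the deduction of the hitting-time statement reduces to the elementary calculation above.
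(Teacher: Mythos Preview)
Your proposal is correct and follows exactly the route the paper intends: the paper states that Theorem~\ref{thm:hitting_time_HAM} is ``a simple corollary of Theorem~\ref{thm:main-thm-ham}'', and your argument spells out precisely that corollary---namely, that a.a.s.\ $m_2 > \tfrac{1+\eps}{6} n\log n$ (which you verify via a standard second-moment computation) and that at time $m_2$ the $2$-core coincides with $G_{m_2}$ itself. There is nothing more to add.
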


We note that Theorem~\ref{thm:hitting_time_HAM} (but not~\ref{thm:main-thm-ham}) was proven independently also by Montgomery~\cite{montgomery2017} using a different approach.

\paragraph{Structure of the paper.}
In the next section we give some necessary definitions and tools used throughout the paper. In addition to providing standard facts, we introduce the notion of \emph{tiny} and \emph{atypical} vertices and a strengthened notion of $\alpha$-resilience. In Section \ref{sec:matching} we prove (a strengthened version) of Theorem \ref{thm:main-thm-pm} that is based on this new version of $\alpha$-resilience. Even though the proof strategy is simple on a high level, it requires many intricate details related to tiny and atypical vertices. In Section~\ref{sec:HAM} we use some of these ideas and combine them with the P\'osa's \emph{rotation-extension} technique to prove a strengthened version of Theorem~\ref{thm:main-thm-ham}.

\section{Preliminaries}

Our graph theoretic notation is standard and follows the one from \cite{diestel2000graph}. In particular, given a graph $G$ and (not necessarily disjoint) subsets $X, Y \subseteq V(G)$, we denote by $e_G(X, Y)$ the number of edges of $G$ with one endpoint in $X$ and the other in $Y$. Note that every edge which lies in the intersection of $X$ and $Y$ is counted twice. We denote by $e_G(X)$ the number of edges with both endpoints in $X$. Furthermore, given a vertex $v$ and an integer $\ell \geq 1$, we denote by $N_G(v)$ its set of neighbours and by $N_{G}^{\ell}(v)$ the set of all vertices which are at distance at most $\ell$ from $v$, excluding $v$. Given graphs $G$ and $H$ with the same vertex set, we define $G - H$ as the graph with vertex set $V(G)$ and edge set $E(G) \setminus E(H)$. Moreover, for a graph $G$ and a set of edges $E$ on the same vertex set, we write $G + E$ to denote the graph on the vertex set $V(G)$ and edge set $E(G) \cup E$. For a positive integer $n$ and a function $0 \leq p := p(n) \leq 1$ we let $G_{n, p}$ denote the probability space of graphs with vertex set $[n] = \{1, \ldots, n\}$ where each pair of vertices forms an edge of $G \sim G_{n, p}$ with probability $p$, independently of all other pairs. We make use of the standard asymptotic notation $o, O, \omega$, and $\Omega$. In addition, we write $O_{\eps}$ or $\Omega_{\eps}$ to emphasise that the hidden constant depends on a parameter $\eps$. For two functions $a$ and $b$ we write $a \ll b$ to indicate $a = o(b)$ and $a \gg b$ to indicate $a = \omega(b)$. In all occurrences $\log$ denotes the natural logarithm. We mostly suppress floors and ceilings whenever they are not crucial. Given $\eps, x, y \in \mathbb{R}$, we write $x \in (1 \pm \eps)y$ to denote $(1 - \eps)y \le x \le (1 + \eps)y$. Finally, we use subscripts with constants such as $C_{2.5}$ to indicate that $C_{2.5}$ is a constant given by Claim/Lemma/Proposition/Theorem 2.5.

Throughout the paper we make use of the following standard estimate on tail probabilities of a binomial random variable $\text{Bin}(n, p)$ with parameters $n$ and $p$, see e.g.~\cite{FK15}.
\begin{lemma}[Chernoff bounds]\label{lem:chernoff}
  Let $X \sim \text{Bin}(n, p)$ and let $\mu := \E[X]$. Then for all $0 < \delta < 1$:
  \begin{itemize}
    \item $\Pr[X \geq (1 + \delta)\mu] \leq e^{-\frac{\delta^2\mu}{3}}$, and
    \item $\Pr[X \leq (1 - \delta)\mu] \leq e^{-\frac{\delta^2\mu}{2}}$.
  \end{itemize}
\end{lemma}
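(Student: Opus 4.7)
The plan is to prove both bounds by the standard exponential moment (Chernoff--Cram\'er) method, writing $X = \sum_{i=1}^n X_i$ as a sum of independent Bernoulli$(p)$ random variables and applying Markov's inequality to $e^{tX}$ for a suitably chosen parameter $t \in \RR$. The core computation is that by independence
\begin{equation*}
  \E[e^{tX}] \;=\; \prod_{i=1}^n \E[e^{tX_i}] \;=\; \bigl(1 - p + pe^{t}\bigr)^n \;\leq\; \exp\bigl(np(e^{t} - 1)\bigr) \;=\; \exp\bigl(\mu(e^{t} - 1)\bigr),
\end{equation*}
where the inequality uses $1 + x \le e^x$. Combining this with Markov's inequality $\Pr[e^{tX} \ge e^{ta}] \le e^{-ta}\E[e^{tX}]$ gives a bound that I will then optimise in $t$.

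For the upper tail I would take $a = (1+\delta)\mu$ and $t = \log(1+\delta) > 0$, which yields the classical estimate
\begin{equation*}
  \Pr[X \ge (1+\delta)\mu] \;\le\; \left(\frac{e^{\delta}}{(1+\delta)^{1+\delta}}\right)^{\mu}.
\end{equation*}
The remaining step is the analytic inequality $\delta - (1+\delta)\log(1+\delta) \le -\delta^2/3$ for all $0 < \delta < 1$, which follows by comparing Taylor expansions (or by verifying that the function $f(\delta) = (1+\delta)\log(1+\delta) - \delta - \delta^2/3$ satisfies $f(0) = 0$ and $f'(\delta) \ge 0$ on $(0,1)$). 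Substituting gives the claimed $\exp(-\delta^2\mu/3)$.

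For the lower tail I would instead choose $t = \log(1-\delta) < 0$ with $a = (1-\delta)\mu$, which after the same manipulations gives
\begin{equation*}
  \Pr[X \le (1-\delta)\mu] \;\le\; \left(\frac{e^{-\delta}}{(1-\delta)^{1-\delta}}\right)^{\mu},
\end{equation*}
and the corresponding inequality $-\delta - (1-\delta)\log(1-\delta) \le -\delta^2/2$ for $0 < \delta < 1$, again provable by a short Taylor/derivative argument, produces the sharper exponent $\delta^2\mu/2$.

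The only mildly delicate part is keeping track of why the upper tail loses a factor in the exponent (getting $3$ rather than $2$): this comes from the fact that on the upper side one must control $e^{t} - 1 - t(1+\delta)$ where $e^{t}$ grows super-linearly, so one cannot get $\delta^2/2$ uniformly in $\delta \in (0,1)$, whereas for the lower tail the analogous function is concave and tighter. Everything else is routine; no probabilistic ingenuity beyond the standard exponential moment argument is needed.
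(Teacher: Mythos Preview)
Your argument is correct and is exactly the standard Chernoff--Cram\'er proof. The paper, however, does not prove this lemma at all: it simply states it as a well-known estimate and cites a reference (e.g.~\cite{FK15}), so there is no ``paper's own proof'' to compare against. Your write-up would serve perfectly well as the omitted verification.
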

\begin{remark}
  This result remains true if $X$ has a hypergeometric distribution, cf.~\cite[Theorem 2.10]{JLR11}.
\end{remark}

\subsection{Tiny and atypical vertices} \label{sec:tiny_atyp}

Even though our main results concern the random graph process rather than the binomial random graph $\Gnp$, in the course of the proof we extensively rely on some of its properties. For small values of $p$, in particular those which give rise to a number of edges as in Theorem \ref{thm:main-thm-pm} and Theorem \ref{thm:main-thm-ham}, a random graph $G_{n,p}$ contains many vertices whose degrees deviate significantly from the average degree. In our proofs those vertices require special attention. The following definition captures them formally.
\begin{definition}
  Given $\delta, p \in [0, 1]$ and a graph $G$ with $n$ vertices, we define the following sets of vertices:
  \begin{align*}
    \TINY_{p, \delta}(G) &= \left\{v \in V(G) \colon \deg_G(v) < \delta np \right\}, \\
    \ATYP_{p, \delta}(G) &= \left\{v \in V(G) \colon \deg_G(v) \notin (1 \pm \delta) np \right\}.
  \end{align*}
  We refer to the vertices in $\TINY_{p, \delta}(G)$ as \emph{tiny} and to the vertices in $\ATYP_{p, \delta}(G)$ as \emph{atypical}.
\end{definition}

Note that in the above definition $G$ is an arbitrary graph, not necessarily a $\Gnp$. This is the reason why $p$ appears in the index as well. In our applications $p$ is usually chosen such that $np$ is roughly (but not necessarily exactly) the average degree of $G$. Thus, as the names indicate, tiny vertices have a degree significantly smaller than the average degree and atypical ones are bounded away from the average by a small constant factor. If $G \sim \Gnp$ then both of these sets become empty as soon as $p$ is large enough: for $p \geq (1 + \eps)\log n/n$ there are a.a.s.\ no tiny vertices for sufficiently small $\delta$ (depending on $\eps$), and similarly for $p \geq C\log n/n$ there are a.a.s.\ no atypical vertices if $C$ is sufficiently large (depending on $\delta$).

With this definition in mind we refine the notion of $\alpha$-resilience.
\begin{definition}\label{def:complicated_resilience}
  Given a graph $G = (V, E)$, a graph property $\cP$, constants $\alpha, \delta_t, \delta_a \in [0, 1]$, and integers $K_t, K_a \in \N$, we say that $G$ is \emph{$(\alpha, \delta_t, K_t, \delta_a, K_a)$-resilient} with respect to $\cP$ if for every spanning subgraph $H \subseteq G$ such that
  \[
    \deg_H(v) \le
      \begin{cases}
        \deg_{G}(v) - K_t, & \text{if } v \in \TINY_{p, \delta_{t}}(G), \\
        \deg_{G}(v) - K_a, & \text{if } v \in \ATYP_{p, \delta_{a}}(G) \setminus \TINY_{p, \delta_{t}}(G), \\
        \alpha\deg_{G}(v), & \text{otherwise},
     \end{cases}
  \]
  for every $v \in V$, where $p = |E| / \binom{|V|}{2}$, we have $G - H \in \cP$.
\end{definition}

Under some natural conditions, one easily sees that $(\alpha,\delta_t,K_t,\delta_a,K_a)$-resilience implies $\alpha$-resilience.
\begin{lemma}\label{lem:resilience-implication}
  Let $G = (V, E)$ be a graph with $n$ vertices and $\alpha, \delta_t, \delta_a \in[0,1]$ and $K_t, K_a \in \N_0$ constants such that $(1 - \alpha)\delta_t np \ge K_a \ge K_t$, where $p = |E| / \binom{n}{2}$. If the minimum degree $d$ of $G$ is such that $d - \floor{\alpha d} \ge K_t$, then $(\alpha, \delta_t, K_t, \delta_a, K_a)$-resilience of $G$ implies $\alpha$-resilience of $G$. \qed
\end{lemma}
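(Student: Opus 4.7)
The plan is a direct verification: given any $H$ witnessing ordinary $\alpha$-resilience (i.e.\ $\deg_H(v) \le \alpha \deg_G(v)$ for all $v$), I would show that this same $H$ also satisfies the stronger per-vertex upper bound demanded by $(\alpha,\delta_t,K_t,\delta_a,K_a)$-resilience, so that applying the hypothesis yields $G - H \in \cP$.

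The three cases in Definition~\ref{def:complicated_resilience} are handled as follows. For $v \notin \ATYP_{p,\delta_a}(G)$ the stronger definition asks exactly for $\deg_H(v) \le \alpha\deg_G(v)$, which is our assumption. For $v \in \ATYP_{p,\delta_a}(G) \setminus \TINY_{p,\delta_t}(G)$, I would use the non-tiny lower bound $\deg_G(v) \ge \delta_t n p$ to write
\[
  \deg_G(v) - \deg_H(v) \;\ge\; (1-\alpha)\deg_G(v) \;\ge\; (1-\alpha)\delta_t n p \;\ge\; K_a,
\]
where the final inequality is the first assumption of the lemma. Finally, for $v \in \TINY_{p,\delta_t}(G)$, the bound on $\deg_G(v)$ may be much smaller than $\delta_t np$, so I cannot use the same estimate; instead I would exploit integrality, noting that $\deg_H(v) \le \alpha\deg_G(v)$ forces $\deg_H(v) \le \lfloor \alpha\deg_G(v)\rfloor$ and hence
\[
  \deg_G(v) - \deg_H(v) \;\ge\; \deg_G(v) - \lfloor \alpha\deg_G(v)\rfloor.
\]

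The last remaining ingredient is the monotonicity of the map $k \mapsto k - \lfloor \alpha k\rfloor$ on $\N$: increasing $k$ by one increases $\lfloor\alpha k\rfloor$ by at most one (since $\alpha \le 1$), so this map is non-decreasing. Combined with $\deg_G(v) \ge d$ and the lemma's hypothesis $d - \lfloor\alpha d\rfloor \ge K_t$, this delivers the required inequality $\deg_G(v) - \deg_H(v) \ge K_t$. The only point that requires any care is this integrality/monotonicity step in the tiny case, since the natural continuous bound $(1-\alpha)\deg_G(v)$ can be arbitrarily small there; apart from that the proof is essentially a bookkeeping exercise. Once all three cases are verified, the $(\alpha,\delta_t,K_t,\delta_a,K_a)$-resilience hypothesis applied to $H$ finishes the argument.
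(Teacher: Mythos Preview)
Your proposal is correct and is exactly the intended argument; the paper itself omits the proof (the lemma is stated with a \qed and no proof environment), treating the verification you wrote out as routine. The only nontrivial point is indeed the integrality/monotonicity step for tiny vertices, which you handle correctly.
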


In particular, in our applications we have that $K_t$ is either $1$ (for perfect matchings) or $2$ (for Hamilton cycles) and $K_a$ is a (large) constant. A random graph $G_{n, p}$ with $p \gg 1/n$ then a.a.s.\ trivially fulfils the first condition in the lemma above. In addition, for $\alpha = 1/2 - \eps$, the second condition is also satisfied if we constrain $G_{n, p}$ to the subgraph induced by vertices with degree $1$ (perfect matchings) resp.\ the $2$-core (Hamilton cycles).

Next, we establish a couple of properties of tiny and atypical vertices in random graphs.
\begin{lemma}\label{lem:atyp-size}
  Given $\delta > 0$, if $p \geq \log n/(3n)$ then $G \sim \Gnp$ a.a.s.\ satisfies:
  \[
    |\ATYP_{p, \delta}(G)| \leq n/(\log n)^3.
  \]
\end{lemma}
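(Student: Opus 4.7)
The plan is to bound $\E[|\ATYP_{p,\delta}(G)|]$ vertex by vertex using the Chernoff bound from Lemma~\ref{lem:chernoff}, and then conclude via Markov's inequality. Since the target bound $n/(\log n)^3$ is only a polylogarithmic improvement on the trivial bound $n$, a simple first-moment calculation is already sufficient, and there is no need for any concentration argument on $|\ATYP_{p, \delta}(G)|$ itself.

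For a fixed vertex $v \in V$, the degree $\deg_G(v) \sim \mathrm{Bin}(n-1, p)$ has mean $\mu := (n-1)p$. For $n$ sufficiently large the thresholds $(1 \pm \delta)np$ appearing in the definition of $\ATYP_{p, \delta}(G)$ lie inside $(1 \pm \delta/2)\mu$, so by applying Lemma~\ref{lem:chernoff} to both tails of $\deg_G(v)$,
\[
  \Pr[v \in \ATYP_{p, \delta}(G)] \leq 2\exp(-c\delta^2 \mu)
\]
for some absolute constant $c > 0$. The assumption $p \geq \log n/(3n)$ gives $\mu \geq (\log n)/4$ for $n$ large, hence $\Pr[v \in \ATYP_{p, \delta}(G)] \leq 2 n^{-c'\delta^2}$ for some constant $c' = c'(\delta) > 0$. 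Summing over all $n$ vertices and using linearity of expectation, $\E[|\ATYP_{p,\delta}(G)|] \leq 2 n^{1 - c'\delta^2}$, and Markov's inequality then yields
\[
  \Pr\bigl[|\ATYP_{p,\delta}(G)| \geq n/(\log n)^3\bigr] \leq 2 (\log n)^3 \cdot n^{-c'\delta^2} = o(1),
\]
as desired.

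There is no significant obstacle. The only mild point of care is matching the asymmetric thresholds $(1 \pm \delta) np$ in the definition of $\ATYP_{p, \delta}(G)$ with the $(1 \pm \delta)\mu = (1 \pm \delta)(n-1)p$ thresholds appearing in Lemma~\ref{lem:chernoff}; this discrepancy is absorbed into the constant $c'$ via a negligible loss in $\delta$ (replacing $\delta$ by, say, $\delta/2$ in the Chernoff input), which costs only a constant factor in the exponent and does not affect the conclusion for large $n$.
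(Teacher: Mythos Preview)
Your proposal is correct and follows essentially the same approach as the paper: bound the probability that a single vertex is atypical via Chernoff, then apply Markov's inequality to the expected count. The paper is simply terser, writing $\Pr[\deg(v)\notin(1\pm\delta)np]\le e^{-\Omega_\delta(np)}\le(\log n)^{-4}$ without fussing over the $np$ versus $(n-1)p$ discrepancy, and then concludes via Markov exactly as you do; your polynomial bound $n^{-c'\delta^2}$ is in fact sharper than the $(\log n)^{-4}$ the paper records, but either suffices.
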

\begin{proof}
  The left hand side consists of all vertices $v\in V(G)$ with degree $\deg(v) \notin (1 \pm \delta)np$. By Chernoff bounds, we have that
  \[
    \Pr[\deg(v) \notin (1 \pm \delta) np] \leq e^{-\Omega_{\delta}(np)} \leq (\log n)^{-4},
  \]
  with room to spare. Thus, Markov's inequality implies that the probability that there are at least $n/(\log n)^3$ vertices with degree not in $(1 \pm \delta)np$ is at most $(\log n)^{-1} = o(1)$, as claimed.
\end{proof}

The crucial observation in our proof strategy is that tiny and atypical vertices cannot be clumped. In particular, no vertex has too many atypical vertices in its proximity.

In the graph process $\{G_i\}$ the property of being tiny or atypical is not monotone. Nevertheless, we expect that these vertices only change slightly over short periods of time. In order to capture this formally, we make use of the fact that one can generate a subsequence of the graph process as follows: choose $p_0$ such that $G^{-} \sim G_{n, p_0}$ has a.a.s.\ just a bit less than $m$ edges, and choose $p'$ such that $G_{n, p'}$ has just a bit more than $\eps m$ edges. Then the union $G^{+} = G^{-} \cup G_{n, p'}$ contains all graphs from $\{G_i\}$ with $m \le i \le (1 + \eps)m$. Moreover, as $p' \ll p_0$ we expect that the tiny and atypical vertices of $G^{-}$ are still scattered, even if we include all the edges from $G^{+}$. The following lemma makes this precise.
\begin{lemma}\label{lem:bad-subgraphs}
  Let $k \geq 2$ be an integer and $\eps > 0$ a constant. There exist positive constants $\delta(\eps)$ and $L(\eps, k)$, such that if $p_0 \geq (1 + \eps)\log n / (kn)$ and $p' \leq \eps p_0$ then a.a.s.\ the following holds. Let $G^{-} \sim G_{n, p_0}$ and set $G^{+} = G^{-} \cup G_{n, p'}$ and $p_1 = 1 - (1 - p_0)(1 - p')$. Then:
  \begin{enumerate}[(i)]
    \item\label{lem-no-tiny-neighbours} for every $v \in V(G^{+})$ we have $\big| N^{3}_{G^+}(v) \cap (\TINY_{p_0, \delta}(G^{-}) \cup \TINY_{p_1, \delta}(G^{+})) \big| \leq k - 1$,
    \item\label{lem-no-atyp-neighbours} for every $v \in V(G^{+})$ we have $\big| N^{3}_{G^+}(v) \cap (\ATYP_{p_0, \delta}(G^{-}) \cup \ATYP_{p_1, \delta}(G^{+})) \big| \leq L$,
    \item\label{lem-no-tiny-cycles} for every cycle $C \subseteq G^{+}$ with $v(C) \leq 2k$ we have
    \[
      \big| V(C) \cap (\TINY_{p_0, \delta}(G^{-}) \cup \TINY_{p_1, \delta}(G^{+})) \big| \le k - 2.
    \]
  \end{enumerate}
\end{lemma}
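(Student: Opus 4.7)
The overall plan is a first-moment argument for each of (i), (ii), (iii): enumerate the potential bad configurations, bound their expected number by $o(1)$, and invoke Markov. The unifying observation is that a bad configuration always comes with a cheap structural witness in $G^+$ — a BFS-tree of depth $\leq 3$ for (i) and (ii), or a short cycle for (iii) — and, conditional on this witness being present, the truncated degrees of the distinguished vertices to $V$ minus the witness become \emph{independent} binomials that still have mean $(1 - o(1))np$, so the tiny/atypical constraints multiply cleanly. To set things up, view $G^+$ via two independent Bernoulli coin families $X_e^1 \sim \mathrm{Ber}(p_0)$ and $X_e^2 \sim \mathrm{Ber}(p')$, with $e \in G^-$ iff $X_e^1 = 1$ and $e \in G^+$ iff $X_e^1 \vee X_e^2 = 1$. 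We may assume $p_0 \leq C_\eps \log n/n$, since otherwise a trivial Chernoff-union bound already rules out every tiny and atypical vertex. Pick $\delta = \delta(\eps) > 0$ so that the sharp lower-tail Chernoff $\Pr[\mathrm{Bin}(N, p) \leq \delta Np] \leq e^{-(1-\eps/4)Np}$ holds for $N = n - O_k(1)$; combined with $Np \geq (1 - o(1))(1+\eps)\log n/k$, this gives tiny probability $\leq n^{-(1+\eps/4)/k}$ per vertex. The analogous two-sided bound gives atypical probability $\leq n^{-\alpha_0}$ for a constant $\alpha_0 = \alpha_0(\eps, \delta, k) > 0$.

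For (i), fix $v$ and distinct $u_1, \dots, u_k$. A witness for ``all $u_i \in N^3_{G^+}(v)$'' is a rooted tree $T \subseteq G^+$ with root $v$, each $u_i$ at depth $\leq 3$, $s \leq 2k$ Steiner vertices and $k+s$ edges; by Cayley there are at most $n^s \cdot O_k(1)$ such trees per $s$, each with $\Pr[T \subseteq G^+] = p_1^{k+s}$. Conditional on $T \subseteq G^+$, the truncated degree $\deg^*(u_i) = |N(u_i) \cap (V \setminus V(T))|$ in the relevant graph is an independent $\mathrm{Bin}(n - |V(T)|, p_{\ast})$ that upper-bounds $\deg(u_i)$ from below, so tininess on either side forces $\deg^*(u_i) < \delta n p_{\ast}$. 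A union bound over the $2^k$ assignments of each $u_i$ to ``tiny in $G^-$'' versus ``tiny in $G^+$'', combined with independence of the $\deg^*(u_i)$, gives conditional probability at most $2^k n^{-(1+\eps/4)}$. Summing over $T$ (the geometric sum in $s$ is dominated at $s = 2k$, yielding $O_k(n^{2k} p_1^{3k}) = O_k((\log n)^{3k}/n^k)$) and over the $O(n^{k+1})$ choices of $(v, u_1, \dots, u_k)$ gives
\[
\E[\#\text{bad}] = O_{k,\eps}\bigl((\log n)^{3k}\bigr) \cdot n^{-\eps/4} = o(1).
\]
Part (ii) is identical with ``tiny'' replaced by ``atypical'' and $L = L(\eps, k) > 1/\alpha_0$ a constant. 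Part (iii) uses a cycle witness of length $\ell \leq 2k$ in place of a tree: there are $O(n^\ell)$ cycles contributing $p_1^\ell$, and the $k-1$ designated tiny vertices contribute $n^{-(k-1)(1+\eps/4)/k}$ by the same truncated-degree argument, giving
\[
\E[\#\text{bad}] \leq \sum_{\ell = 3}^{2k} O_k\bigl((np_1)^\ell\bigr) \cdot n^{-(k-1)(1+\eps/4)/k} = O_k\bigl((\log n)^{2k}\bigr) \cdot n^{-(k-1)(1+\eps/4)/k} = o(1),
\]
since $(k-1)/k \geq 1/2$ for $k \geq 2$.

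The main obstacle is obtaining the sharp Chernoff rate: the quadratic bound of Lemma~\ref{lem:chernoff} only gives $\Pr[\mathrm{Bin}(N, p) < \delta Np] \leq e^{-(1-\delta)^2 Np/2}$, which translates to $n^{-(1+\eps)/(2k)}$ per tiny vertex; its $k$-th power $n^{-(1+\eps)/2}$ cannot defeat the $n^{k+1}$ enumeration. One instead needs the asymptotic rate $e^{-(1-o_\delta(1))Np}$ coming from exponential Markov with the optimal tilt $\lambda = \log(1/\delta)$, which is standard but stronger than the preliminaries state, and requires picking $\delta$ small in $\eps$ so that the $o_\delta(1)$ loss sits below $\eps/2$. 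A secondary technicality is the simultaneous handling of tininess in $G^-$ at $p_0$ and in $G^+$ at $p_1$: this is absorbed by the $2^k$ (resp.\ $2^L$) union bound over per-vertex assignments, since after conditioning on the structural witness $T$, the truncated degrees $\deg^*(u_i)$ in either graph remain independent $\mathrm{Bin}(n - |V(T)|, p_{0/1})$.
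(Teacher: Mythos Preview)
Your proposal is correct and follows essentially the same approach as the paper's proof: a first-moment argument where the bad configuration is witnessed by a small tree (for (i) and (ii)) or a short cycle (for (iii)) in $G^+$, and conditioning on this witness makes the truncated degrees of the distinguished vertices into independent binomials so that the tiny/atypical constraints multiply. The only cosmetic difference is the parametrisation of the enumeration: the paper sums over all labelled trees $T$ with $k\le v(T)\le 4k$ and then over $k$-subsets $S\subseteq V(T)$, whereas you fix $(v,u_1,\dots,u_k)$ first and then enumerate Steiner trees of bounded size connecting them; both routes give the same $n\cdot (\log n)^{O_k(1)}\cdot n^{-1-\Omega(\eps)}$ bound. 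You are also right that the quadratic Chernoff of Lemma~\ref{lem:chernoff} is too weak here and one needs the sharp rate $e^{-(1-o_\delta(1))Np}$ for the lower tail at $\delta Np$; the paper uses exactly this, hiding it behind ``elementary calculations show that this sum can be bounded by $e^{-(1-\eps/4)np_0}$, for sufficiently small $\delta$''.
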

\begin{proof}
  Throughout the proof we assume that $V(K_n) = V(G^{+})$. We make use of the fact that $G^+$ is distributed as $G_{n, p_1}$.

  $\ref{lem-no-tiny-neighbours}$ We prove a more general statement: if $T \subseteq G^+$ is a tree with $k \le v(T) \le 4k$ vertices, then it contains at most $k - 1$ vertices from $\TINY_{p_0, \delta(G^-)} \cup \TINY_{p_1, \delta}(G^+)$. Suppose that this is the case. Let $v \in V(G^{+})$ be an arbitrary vertex and assume, towards a contradiction, that there are $k$ vertices $u \in \TINY_{p_0, \delta}(G^{-}) \cup \TINY_{p_1, \delta}(G^{+})$ in $N^{3}_{G^{+}}(v)$. Choose $T$ to be a tree which contains $v$ and a shortest path between $v$ and every such vertex $u$. Then $T$ has at least $1 + k$ and at most $1 + 3k$ vertices, and contains $k$ vertices from $\TINY_{p_0, \delta}(G^{-}) \cup \TINY_{p_1, \delta}(G^{+})$, which is a contradiction.

  Let $T \subseteq K_n$ be a tree with $k \leq v(T) \leq 4k$ vertices and consider a subset $S \subseteq V(T)$ of size exactly $k$. Let $\cE_{T}$ denote the event that $T \subseteq G^{+}$ and $\cE_{T, S}$ the event that every vertex in $S$ has at most $\delta np_0$ neighbours in $G^{-}$ which are outside of $V(T)$, or at most $\delta np_1$ neighbours in $G^{+}$ outside of $V(T)$. Note that $\cE_T$ and $\cE_{T, S}$ are independent events. Moreover, if there exists a tree $T \subseteq G^{+}$ which contains at least $k$ vertices from $\TINY_{p_0, \delta}(G^{-}) \cup \TINY_{p_1, \delta}(G^{+})$ then clearly both events $\cE_T$ and $\cE_{T, S}$ happen for $S$ being a subset of size $k$ of such vertices. We show that the probability that $\cE_T \wedge \cE_{T, S}$ happens for any $T, S$ is $o(1)$, which implies the statement.

  First, note that for a fixed tree $T \subseteq K_n$ we have $\Pr[\cE_{T}] = p_1^{v(T) - 1}$. Next, in order for $\cE_{T, S}$ to happen, for each vertex in $S$ we need it to have at most $\delta np_0$ edges incident to it in $G^{-}$, or at most $\delta np_1$ edges in $G^{+}$, with the other endpoint being in $V(G^{+}) \setminus V(T)$. The probability that this happens is at most
  \[
     \Pr[\text{Bin}(n - v(T), p_0) \le \delta np_0] + \Pr[\text{Bin}(n - v(T), p_1) \le \delta np_1].
  \]
  Elementary calculations show that this sum can be bounded by $e^{- (1 - \eps/4) np_0}$, for sufficiently small $\delta$ depending on $\eps$. As the probabilities are independent for different vertices in $S$, we get
  \[
    \Pr[\cE_{T, S}] \leq (e^{- (1 - \eps/4) np_0})^{k}.
  \]
  For each $k \le t \le 4k$ there are at most $\binom{n}{t} t^{t - 2}$ different trees $T \subseteq K_n$ with exactly $t$ vertices, and for each such tree $\binom{t}{k}$ choices for $S \subseteq V(T)$. Therefore, using union bound over all values of $t$ and all such pairs $(T, S)$ we estimate the probability that some $\cE_T \wedge \cE_{T, S}$ happens as follows:
  \[
    \Pr[ \bigcup_{(T, S)} (\cE_{T} \land \cE_{T, S}) ] \leq
    \sum_{t = k}^{4k} \binom{n}{t} \binom{t}{k} t^{t - 2} \cdot \Pr[\cE_{T} \land \cE_{T, S}] \leq
    \sum_{t = k}^{4k} n^{t} t^{t} \cdot p_1^{t - 1} \cdot e^{- k(1 - \eps/4)np_0}.
  \]
  As $p_1 \leq (1 + \eps)p_0$ and $n^t p_0^{t - 1} \cdot e^{- k(1 - \eps/4) np_0}$ is decreasing in $p_0$, this implies
  \[
    \Pr[ \bigcup_{(T, S)} (\cE_{T} \land \cE_{T, S}) ] = O_{\eps,k}( n\cdot (\log n )^{4k} \cdot n^{- 1 - \eps/2} ) = o(1).
  \]

  $\ref{lem-no-atyp-neighbours}$ Similarly to the previous case, we prove a statement for trees that implies the desired result. Let $T \subseteq K_n$ be a tree with $L \leq v(T) \leq 4L$ and let $S \subseteq V(T)$ be a set of $L$ vertices. Let $\cE_{T}$ denote the event that $T \subseteq G^{+}$ and $\cE_{T, S}$ the event that every vertex $v \in S$ satisfies either $|N_{G^{-}}(v) \setminus V(T)| \notin (1 \pm \delta/2)np_0$ or $|N_{G^{+}}(v) \setminus V(T)| \notin (1 \pm \delta/2)np_1$. If there exists $T \subseteq G^{+}$ with at least $L$ vertices belonging to $\ATYP_{p_0, \delta}(G^{-}) \cup \ATYP_{p_1, \delta}(G^{+})$, both $\cE_{T}$ and $\cE_{T, S}$ happen (where once again $S$ is any subset of $L$ such vertices). For a fixed vertex $v \in S$, Chernoff bounds show that the probability that $v$ satisfies $|N_{G^{-}}(v) \setminus V(T)| \notin (1 \pm \delta/2)np_0$ or $|N_{G^{+}}(v) \setminus V(T)| \notin (1 \pm \delta/2)np_1$ is at most
  \[
    \Pr[\cE_{T, S}] \leq \Pr[\text{Bin}(n - v(T), p_0) \notin (1 \pm \delta/2)np_0] + \Pr[\text{Bin}(n - v(T), p_1) \notin (1 \pm \delta/2)np_1] \leq e^{-\gamma np_0},
  \]
  for some $\gamma = \gamma(\eps) > 0$ (for $\delta=\delta(\eps)$ sufficiently small). Note that these events are independent for different vertices in $S$ and therefore
  \[
    \Pr[\cE_{T, S}] \leq (e^{-\gamma np_0})^{L}.
  \]
  Analogous union bound analysis as in part $\ref{lem-no-tiny-neighbours}$ yields
  \[
    \Pr[ \bigcup_{(T, S)} (\cE_{T} \land \cE_{T, S}) ] = O_{\eps, k}( n \cdot (\log n )^{4L} \cdot e^{- L\gamma np_0} ) = o(1),
  \]
  for $L$ large enough depending on $\eps$ and $k$.

  $\ref{lem-no-tiny-cycles}$ Let $C \subseteq K_n$ be a cycle with $3 \leq v(C) \leq 2k$ vertices and consider a subset $S \subseteq V(C)$ of size exactly $k - 1$. Let $\cE_{C}$ and $\cE_{C, S}$ denote the events as in $\ref{lem-no-tiny-neighbours}$ with $T$ replaced by $C$. Assuming that there exists a cycle $C \subseteq G^{+}$ with at least $k - 1$ vertices from $\TINY_{p_0, \delta}(G^{-}) \cup \TINY_{p_1, \delta}(G^{+})$, both events $\cE_{C}$ and $\cE_{C, S}$ happen (again, for $S$ being any subset of $k - 1$ such vertices). Analogous analysis as before shows
  \[
    \Pr[\cE_{C, S}] \leq (e^{- (1 - \eps/4) np_0})^{k - 1},
  \]
  On the other hand, we have $\Pr[\cE_{C}] = p_1^{v(C)}$. For each $3 \leq \ell \leq 2k$ there are less than $n^\ell$ cycles $C \subseteq K_n$ with exactly $\ell$ vertices, and for each such cycle at most $\binom{\ell}{k - 1}$ many choices for $S \subseteq V(C)$. A union bound over all values of $\ell$ and all possible pairs $(C, S)$, shows that the probability that the property $\ref{lem-no-tiny-cycles}$ fails is at most
  \[
    \Pr [ \bigcup_{(C, S)} (\cE_{C} \land \cE_{C, S})] \leq \sum_{\ell = 3}^{2k} n^{\ell} \binom{\ell}{k - 1} \cdot \Pr[\cE_{C} \land \cE_{C, S}] \leq \sum_{\ell = 3}^{2k} n^{\ell} \ell^{k} \cdot p_1^{\ell} \cdot e^{- (k - 1)(1 - \eps/4) np_0}.
  \]
  We once again obtain
  \[
    \Pr [ \bigcup_{(C, S)} (\cE_{C} \land \cE_{C, S})] = O_{\eps, k}( (\log n)^{2k} \cdot n^{-(k - 1)(1 + \eps/2)/k} ) = o(1),
  \]
  and the property follows.
\end{proof}

We note that the proof actually shows that we could replace $N_{G^+}^3(v)$ with $N_{G^+}^\ell(v)$ for any constant $\ell \in \N$. For our purposes, the third neighbourhood suffices.

\subsection{Properties of random graphs}\label{sec:random_graphs}
In this subsection we establish some further, more general properties of random graphs which we rely on in the later sections. The following is a well known bound on the number of edges between sets of vertices in random graphs (see, e.g.~\cite[Corollary 2.3]{krivelevich2006pseudo}).
\begin{lemma}\label{lem:gnp-edge-distribution}
  Let $p = p(n) \leq 0.99$. Then $G \sim \Gnp$ a.a.s.\ has the property that for every two (not necessarily disjoint) subsets $X, Y \subseteq V(G)$ the number of edges with one endpoint in $X$ and the other in $Y$ satisfies:
  \[
    |e_G(X, Y) - |X||Y|p| \le c \sqrt{|X||Y|np},
  \]
  for some absolute constant $c > 0$.
\end{lemma}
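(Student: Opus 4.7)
The plan is to use a standard Chernoff-plus-union-bound strategy. For fixed sets $X, Y \subseteq V(G)$ of sizes $|X| = a$, $|Y| = b$, decompose
\[
e_G(X, Y) = 2 e_G(X \cap Y) + e_G(X \setminus Y, Y) + e_G(Y \setminus X, X \setminus Y),
\]
so that (up to the factor $2$ on edges inside $X \cap Y$) $e_G(X, Y)$ is a sum of independent Bernoulli indicators, one per potential edge. Its expectation is $(ab - |X \cap Y|)p$, which differs from $abp$ by at most $\min(a, b) p \leq \sqrt{abnp}$, so that deviation is absorbed into the target bound.

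Applying the two-sided Chernoff bound (a version of Lemma~\ref{lem:chernoff} allowing $t$ larger than the mean) gives, for any $t \geq 0$,
\[
\Pr\bigl[\,|e_G(X, Y) - abp| \geq t \,\bigr] \leq 2\exp\!\left(-\Omega\!\left(\min\!\left(\tfrac{t^2}{abp},\, t\right)\right)\right).
\]
I will take $t = c\sqrt{abnp}$ for a large absolute constant $c$ and split into two regimes. In the \emph{dense} regime $abp \geq n$ we have $t \leq abp$ and the exponent becomes $\Omega(c^2 n)$, which beats the crude $\binom{n}{a}\binom{n}{b} \leq 4^n$ count of pairs for $c$ large enough. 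In the \emph{sparse} regime $abp < n$ the bound degrades to $\exp(-\Omega(c\sqrt{abnp}))$, and here I need the finer estimate $\binom{n}{a}\binom{n}{b} \leq \exp\bigl(a\log(en/a) + b\log(en/b)\bigr)$ together with the observation that, in the ranges of $p$ relevant to the paper (where $np \geq \Omega(\log n)$), the exponent from Chernoff dominates $a\log(en/a) + b\log(en/b)$. Summing the failure probability over all pairs $(a, b) \in [n]^2$ then yields $o(1)$.

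The main obstacle is precisely the sparse regime: the Chernoff bound gives only linear-in-$t$ decay there, so one must verify that $a\log(en/a) + b\log(en/b) = O(\sqrt{abnp})$ with room to spare. This is elementary (using $x \log(en/x) \leq \sqrt{xn}$ for $x \leq n$ and a dyadic sum over sizes), but some care is needed to avoid losing logarithmic factors. A cleaner conceptual alternative would be to invoke the expander mixing lemma together with the classical spectral bound $\lambda(G_{n,p}) = O(\sqrt{np})$ a.a.s., which immediately gives the inequality with $c$ equal to the constant hidden in the spectral bound; this is the route taken in the cited reference~\cite{krivelevich2006pseudo}.
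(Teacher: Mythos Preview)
The paper does not prove this lemma at all; it simply cites \cite[Corollary~2.3]{krivelevich2006pseudo}. That reference obtains the bound exactly via the route you mention in your final sentence: the expander mixing lemma together with the a.a.s.\ eigenvalue bound $\lambda_2(G_{n,p}) = O(\sqrt{np})$. So your ``cleaner conceptual alternative'' is in fact the intended argument, and there is nothing further to compare on that front.

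Your primary Chernoff-plus-union-bound plan, on the other hand, does not deliver the lemma as stated. Two concrete issues. First, the inequality $x\log(en/x) \le \sqrt{xn}$ that you invoke is false (take $x = n/e^2$: the left side is $3n/e^2 \approx 0.41n$, the right side is $n/e \approx 0.37n$); it only holds up to a multiplicative constant. Second, and more fundamental, even with that constant your sparse-regime step requires roughly $c\sqrt{abnp} \gtrsim \sqrt{an} + \sqrt{bn}$, which rearranges to $\min(a,b)\cdot p \gtrsim 1/c^{2}$. This fails whenever one of the two sets has size $o(1/p)$, and no absolute constant $c$ rescues it uniformly over all $p \le 0.99$---which is the range the lemma claims. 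You partially acknowledge this by restricting to $np \ge \Omega(\log n)$, but that already gives up the statement as written, and even then the small-$a$ case is not covered by your sketch. This is the well-known reason the edge-discrepancy bound with the clean $\sqrt{|X||Y|np}$ error is proved spectrally rather than by Chernoff and union bound: the latter route generically leaves a logarithmic factor behind.
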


Lastly, we show that if $p$ is not too large then the neighbourhood of every two vertices is almost disjoint.
\begin{lemma}\label{lem:small-codegree}
  If $p = o(n^{-5/6})$ then $G \sim \Gnp$ a.a.s.\ has the property that every two distinct vertices have at most two common neighbours, i.e.\ $|N_G(u) \cap N_G(v)| \leq 2$ for all distinct $u, v \in V(G)$.
\end{lemma}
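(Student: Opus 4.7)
The plan is to apply a simple first-moment (union bound) argument on the number of copies of $K_{2,3}$ in $G$. The event that some pair of distinct vertices $u, v$ has at least three common neighbours is exactly the event that $G$ contains a copy of $K_{2,3}$ (with $u$ and $v$ on the side of size $2$).

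The main step is to count the expected number of such configurations. There are $\binom{n}{2}$ choices for the pair $\{u, v\}$ and then $\binom{n-2}{3}$ choices for a triple of potential common neighbours $\{w_1, w_2, w_3\}$; for each such configuration, the probability that all six edges $uw_i, vw_i$ (for $i = 1, 2, 3$) are present in $G \sim G_{n, p}$ is $p^6$. Letting $X$ denote the number of such (pair, triple) configurations, we thus have
\[
\E[X] \;\le\; \binom{n}{2} \binom{n-2}{3} p^6 \;=\; O(n^5 p^6).
\]
Since $p = o(n^{-5/6})$, this gives $\E[X] = o(1)$, and Markov's inequality then yields $\Pr[X \ge 1] = o(1)$. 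Equivalently, a.a.s.\ no two distinct vertices share three or more common neighbours, which is exactly the claim.

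There is essentially no obstacle here; the proof is a routine first-moment computation. The only minor subtlety is to make sure that the counted configuration really captures the bad event (i.e.\ that having three common neighbours is equivalent to the existence of such a six-edge subgraph, which is immediate), and that the exponent $5/6$ in the hypothesis on $p$ is exactly what is needed to make $n^5 p^6 = o(1)$.
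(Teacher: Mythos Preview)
Your proposal is correct and matches the paper's own proof essentially verbatim: both apply a first-moment/union bound over pairs $\{u,v\}$ and triples of common neighbours, obtaining $\binom{n}{2}\binom{n-2}{3}p^6 \le n^5 p^6 = o(1)$.
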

\begin{proof}
  It is sufficient to show that there are no two vertices with a common neighbourhood of size three. The probability that there exists a pair of vertices $u, v \in V(G)$ violating this property is at most
  \[
    \binom{n}{2} \binom{n - 2}{3} p^{6} \leq n^{5} p^{6} = o(1).
  \]
  This completes the proof.
\end{proof}

\section{Perfect matchings}\label{sec:matching}

Let $\{G_i\}$ denote the random graph process. Recall that Theorem~\ref{thm:main-thm-pm} states that a.a.s.\ for every $m \ge \frac{1 + \eps}{4} n \log n$ we have that the subgraph of $G_m$ obtained by removing isolated vertices is resilient with respect to containing a perfect matching. As remarked earlier, $G_m$ has the same distribution as $\Gnm$ thus an obvious way to prove this statement would be to estimate for each $m$ the probability that $\Gnm$ has the resilience property and apply a union bound. Unfortunately, this probability is roughly $1 - e^{- \alpha \cdot 2m/n}$, for some small constant $\alpha > 0$, which is too weak to cover all the values of $m$ which are of order $n\log n$. This is not surprising as typically random graphs fail to satisfy a property involving spanning subgraphs with a probability that is only exponential in the average degree. We go around this issue by using the following strategy: rather than handling each graph individually, we bundle consecutive graphs in groups of size roughly $\eps n \log n$. The following proposition shows that almost surely \emph{all} graphs in such a group have the desired property. A union bound over constantly many groups thus implies that the theorem holds for all $m \in \{\frac{1 + \eps}{4} n \log n, \ldots, C n \log n\}$, for some large constant $C$ of our choice. The remaining values of $m$ can then be treated one at a time using a result of Sudakov and Vu~\cite{SV08}, which gives a probability of at least $1 - 1/n^3$ for all $m \ge C n\log n$.
\begin{proposition}\label{prop:fixed-range-pm}
  For every constant $\eps > 0$ and integer $\frac{1 + \eps}{4} n \log n \le m_0 \le n (\log n)^2$, there exist positive constants $\delta_{t}(\eps)$, $\delta_{a}(\eps)$, and $K(\eps)$ such that the random graph process $\{G_i\}$ a.a.s.\ has the following property: For any integer $m_0 \leq m \leq (1 + \eps/4)m_0$ the graph obtained by removing all isolated vertices from $G_m$ is $(1/2 - \eps, \delta_t, 1, \delta_a, K)$-resilient with respect to having a perfect matching.
\end{proposition}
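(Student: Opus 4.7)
My plan is to avoid analysing each $G_m$ individually and instead sandwich the whole window $[m_0,(1+\eps/4)m_0]$ between two binomial random graphs. I will choose $p_0$ so that $G^-\sim G_{n,p_0}$ has slightly fewer than $m_0$ edges and $p'$ so that $G_{n,p'}$ has slightly more than $(\eps/4)m_0$ edges; then $G^+=G^-\cup G_{n,p'}\sim G_{n,p_1}$ and a.a.s.\ every $G_m$ in the window lies between $G^-$ and $G^+$. Since $m_0\ge\tfrac{1+\eps}{4}n\log n$ implies $p_0\ge\tfrac{1+\eps/2}{2}\cdot\tfrac{\log n}{n}$, I will apply Lemma~\ref{lem:bad-subgraphs} with $k=2$, giving: (i) at most one tiny vertex of $G^-$ or $G^+$ in any $3$-neighbourhood in $G^+$; (ii) at most $L=L(\eps)$ atypical vertices in any $3$-neighbourhood; and (iii) no cycle of $G^+$ of length $\le 4$ meets a tiny vertex. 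I will also use Lemma~\ref{lem:atyp-size} to bound $|\ATYP|=o(n/\log^2 n)$, Lemma~\ref{lem:gnp-edge-distribution} for edge counts, and fix $\delta_t,\delta_a,K$ so that the tiny/atypical sets of $G=G_m$ are contained in the corresponding sets of $G^-$ or $G^+$.

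\textbf{Pre-matching tiny vertices.} Fix $m$ in the window, let $G$ be $G_m$ with isolated vertices removed, and let $H$ be any adversarial subgraph as in Definition~\ref{def:complicated_resilience}; set $\tilde G=G-H$. By $K_t=1$ each tiny vertex keeps at least one surviving edge; every atypical non-tiny vertex keeps at least $K$; every remaining vertex keeps at least $(\tfrac12+\eps/2)np$ edges. By structural property (i), distinct tiny vertices have disjoint $1$-neighbourhoods in $G^+$, so the edge I pick per tiny vertex yields a valid matching $M_T$ covering $\TINY$. Let $\tilde G'=\tilde G[V\setminus V(M_T)]$; by (i) each vertex outside $M_T$ loses at most one edge to $M_T$, so in $\tilde G'$ each atypical non-tiny vertex still has degree $\ge K-1$ and each typical vertex has degree $\ge(\tfrac12+\eps/2)np-1$. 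It suffices to prove that $\tilde G'$ has a perfect matching.

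\textbf{Perfect matching in $\tilde G'$ via Hall/Tutte-type expansion.} This is the main obstacle. I will verify the standard expansion condition: every $U\subseteq V(\tilde G')$ with $|U|\le|V(\tilde G')|/2$ has $|N_{\tilde G'}(U)|\ge|U|+1$, which by Tutte's theorem produces the required perfect matching. I split into three size regimes. For \emph{tiny} $|U|\le K/3$: since atypical non-tiny vertices still have degree $\ge K-1$ and by Lemma~\ref{lem:small-codegree} any two vertices of $G^+$ share $\le 2$ neighbours, $|N_{\tilde G'}(U)|\ge K|U|/4\ge |U|+1$. For \emph{small} $K/3\le|U|\le n/\log^2 n$: most of $U$ consists of typical vertices of degree $\Omega(\log n)$ in $\tilde G'$; combining the small-codegree bound with (ii) (at most $L$ atypical vertices in a $3$-neighbourhood) gives $|N_{\tilde G'}(U)|\ge 2|U|$. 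For \emph{large} $n/\log^2 n\le|U|\le n/2$: I will use Lemma~\ref{lem:gnp-edge-distribution} on $G^-\subseteq G$, which implies $e_G(U,W)\approx |U||W|p$ for any $W$ of size $\Omega(n)$; the adversary is constrained to remove at most $(\tfrac12-\eps)$ of each non-bad vertex's edges and essentially all edges of $\ATYP$, which is too small (by Lemma~\ref{lem:atyp-size}) to bridge the gap. Hence any failure of $|N_{\tilde G'}(U)|\ge|U|$ would force $H$ to delete strictly more edges than permitted, a contradiction.

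\textbf{Union bound.} The arguments above rely on finitely many events of the form "sandwich holds", "Lemma~\ref{lem:bad-subgraphs}(i)--(iii) holds", "$|\ATYP|$ is small" and "the edge-distribution estimate holds", each a.a.s.\ by the corresponding lemma. Since these deterministic consequences hold simultaneously for every $G_m$ with $m$ in the window, the desired $(1/2-\eps,\delta_t,1,\delta_a,K)$-resilience follows. The main technical difficulty I anticipate will be bookkeeping the atypical vertices in the small-$|U|$ regime, because the adversary can delete nearly all edges at such vertices; the key leverage is that property~(ii) bounds how many of them can pile up in any local neighbourhood, so their influence on $|N_{\tilde G'}(U)|$ is absorbed by the expansion contributed by typical vertices.
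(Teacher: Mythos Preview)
Your sandwich setup, choice of $\TINY/\ATYP$ via $G^-$ and $G^+$, and the greedy pre-matching of tiny vertices all match the paper. The gap is in your ``Perfect matching in $\tilde G'$ via Tutte-type expansion'' step, specifically the large-$|U|$ regime. The condition you aim for---$|N_{\tilde G'}(U)|\ge|U|$ for every $|U|\le|V(\tilde G')|/2$---is simply \emph{false} for $\tilde G'$. Take any vertex $w$ and let $U$ be any subset of $V(\tilde G')\setminus(\{w\}\cup N_{\tilde G'}(w))$ of size $\lfloor|V(\tilde G')|/2\rfloor$; this is possible because $\deg_{\tilde G'}(w)=O(\log^2 n)=o(n)$. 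Then $w\notin U$ and $w$ has no neighbour in $U$, so $w\notin N_{\tilde G'}(U)$ and hence $|N_{\tilde G'}(U)|\le|V(\tilde G')|-|U|-1<|U|$. Your edge-counting sketch cannot close this: writing $T=N_{\tilde G'}(U)\setminus U$, from $\sum_{u\in U}\deg_{\tilde G'}(u)\ge|U|(1/2+\eps')np$ together with Lemma~\ref{lem:gnp-edge-distribution} one only deduces a contradiction with $|T|<|U|$ when $|U|\le(1/4+\eps'')n$, not up to $n/2$. The ``$|W|=\Omega(n)$'' hypothesis you invoke fails precisely here, since $|W|=|V|-|U|-|T|$ can be as small as $1$.

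This is not a cosmetic issue: the paper's argument is organised around avoiding exactly this obstacle. After greedily matching \emph{all} atypical vertices (not just tiny ones), the paper takes a \emph{random balanced bipartition} $A\cup B$ of the remaining typical vertices, identifies and pre-matches the few ``degenerate'' vertices whose degree into $A$ or $B$ is too small (showing via a separate claim that degenerate vertices do not clump), moves a small $2$-independent set to rebalance, and then verifies Hall's condition in the resulting bipartite graph. Crucially, in that bipartite graph the minimum degree into the opposite side exceeds $(1/4+\eps')np_1$, so Hall need only be checked for subsets of size at most $|A|/2\approx n/4$, and there the edge-counting via Lemma~\ref{lem:gnp-edge-distribution} goes through. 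Your proposal lacks any analogue of the random bipartition and the degenerate-vertex step, and without them the range $|U|\in(n/4,n/2]$ cannot be handled.
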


We remark that the fact that we use the same $\eps$ in the lower bound for $m_0$ as we do for the resilience is without loss of generality, as we can always choose the smaller of the two for both.

\paragraph{Proof overview.} The proof itself is somewhat technical, so let us first give a brief overview before we dive into the details. To handle all graphs $G_m$ for $m \in [m_0, (1 + \eps/4)m_0]$ simultaneously we change the way $G_m$ is generated (akin to what has been indicated prior to Lemma \ref{lem:bad-subgraphs}). Instead of generating $G_m$ starting from an empty graph, we obtain it as follows: sample $G^{-} \sim G_{n, p_0}$ and $G_{n, p'}$, and choose a random ordering $\pi$ of the edges in $G_{n, p'}$. In particular, we choose $p_0$ and $p'$ such that almost surely $G^{-}$ has less than $m_0$ edges (but not too much less) and $G^{+} = G^{-} \cup G_{n, p'}$ has more than $(1 + \eps/4)m_0$ edges (again, not too much more). Now we can generate each $G_m$ as a union of $G^{-}$ and the first $m - e(G^{-})$ edges according to $\pi$; it is a simple exercise to show that this is the same as generating $G_m$ from `scratch'.

It is crucial that all the properties of $G^{-}$ and $G^{+}$ that we use (e.g.\ upper bound on the number of edges between certain sets in $G^{+}$, tiny and atypical vertices being far apart in both $G^{-}$ and $G^{+}$, etc.), are such that they are also satisfied by all graphs `squeezed' in between them. While this is a standard technique in showing hitting-time results, it raises problems in the resilience setting. For example, vertices can be tiny or atypical in $G_m$ without being tiny or atypical in $G^{-}$ or $G^{+}$.

We circumvent this by defining {\em tiny} vertices to be those vertices that are tiny in at least one of $G^{-}$ or $G^{+}$ with respect to a parameter $\delta_t'$ which is somewhat larger than the value of $\delta_t$ in the definition of $(1/2 - \eps, \delta_t, 1, \delta_a, K)$-resilience in Proposition~\ref{prop:fixed-range-pm}. In this way we can guarantee that every vertex in $G$ that is tiny with respect to $\delta_t$ is also tiny in $G^{-}$ or $G^{+}$ with respect to $\delta_t'$. If we thus allow an adversary to remove all but one incident edge for all vertices that are tiny in $G^{-}$ or $G^{+}$ with respect to $\delta_t'$, this may result in the removal of more edges than allowed by the definition of $(1/2 - \eps, \delta_t, 1, \delta_a, K)$-resilience. Nevertheless, Lemma~\ref{lem:bad-subgraphs} still guarantees that all those vertices are sufficiently far apart in $G^{+}$ and thus also in $G \subseteq G^{+}$. This, in turn, implies that all these vertices can be covered in the matching even though they all have only one edge left. Atypical vertices are handled similarly by considering atypical vertices in $G^{-}$ or $G^{+}$ with respect to a parameter $\delta_a'$ that is somewhat smaller than the value of $\delta_a$ in the definition of $(1/2 - \eps, \delta_t, 1, \delta_a, K)$-resilience. The rest of the proof then follows standard arguments for finding a perfect matching in sparse random graphs (see, e.g.~\cite{JLR11,luczak1991tree}): greedily match all tiny and atypical vertices, in that order, and show that the remaining graph can be equipartitioned in a way that the resulting bipartite graph satisfies Hall's matching criteria.
\begin{proof}
  Let $p_0 = (1 - \eps/16) m_0 / \binom{n}{2}$, $p' = (\eps/2) p_0$, and let $G^{+}$ be the union of independent random graphs $G^{-} \sim G_{n, p_0}$ and $G_{n, p'}$. Then $G^{+}$ is distributed as $G_{n,p_1}$, where $p_1 = 1 - (1 - p_0)(1 - p')$. By Lemma~\ref{lem:gnp-edge-distribution} we have that the number of edges in $G^{-}$ is a.a.s.\ at most $m_0$ and the number of edges in $G^{+}$ is a.a.s.\ at least $(1 + \eps/4) m_0$.

  Let $\delta_{t}' = \delta_{{\ref{lem:bad-subgraphs}}}(\eps/2)$, $\delta_{a}' = \min\{ \eps/64, \delta_{{\ref{lem:bad-subgraphs}}}(\eps/2) \}$, $c = c_{\ref{lem:gnp-edge-distribution}}$, $L = \max\{L_{\ref{lem:bad-subgraphs}}(\eps/2), L_{\ref{cl:no-small-pm-deg-tree}}(\eps)\}$. With this at hand, we set the constants given in the statement of the proposition as follows: $\delta_{t} = \delta_{t}'/40$, $\delta_{a} = \max\{ \eps/4, 16\delta_{a}' \}$, and $K = 2L$.

  For the rest of the proof consider some $m_0 \le m \le (1 + \eps/4)m_0$, and let $G \subseteq G_m$ be the subgraph obtained by removing all isolated vertices from $G_m$. Let $V = V(G)$ denote its vertex set and let $\TINY$ and $\ATYP$ be sets of vertices defined as:
  \begin{align*}
    \TINY &:= (\TINY_{p_0, \delta_{t}'}(G^{-}) \cup \TINY_{p_1, \delta_{t}'}(G^{+})) \cap V, \\
    \ATYP &:= (\ATYP_{p_0, \delta_{a}'}(G^{-}) \cup \ATYP_{p_1, \delta_{a}'}(G^{+})) \cap V.
  \end{align*}
  By Lemma \ref{lem:atyp-size} there are at most $n/\log^3 n$ isolated vertices in $G^-$, thus there are at most that many in $G_m$ as well. Our choice of constants then implies that for all $m_0 \leq m \leq (1 + \eps/4)m_0$ we have
  \begin{align*}
    \TINY_{p, \delta_{t}}(G) \subseteq \TINY
    \qquad\mbox{and}\qquad
    \ATYP_{p, \delta_{a}}(G) \subseteq \ATYP,
  \end{align*}
  where $p = m/\binom{v(G)}{2}$ denotes the density of $G$ (note that $E(G) = E(G_m)$ as we only remove isolated vertices). As noted in the discussion before the proof, if we show that for every graph $H$ of the form
  \begin{equation}\label{eq:weaker}
    \deg_H(v) \leq
      \begin{cases}
        \deg_{G}(v) - 1, & \text{if } v \in \TINY, \\
        \deg_{G}(v) - K, & \text{if } v \in \ATYP \setminus \TINY, \\
        (1/2 - \eps)np_1, & \text{otherwise},
      \end{cases}
  \end{equation}
  the graph $G - H$ contains a perfect matching, then this implies that $G$ is $(1/2 - \eps, \delta_t, 1, \delta_a, K)$-resilient with respect to having a perfect matching. Note that this follows from the fact that typical vertices in $G$ have degree at most $(1 + \delta_{a})np$ and thus removing an $(1/2 - \eps)$-fraction of their degree is less than removing $(1/2 - \eps)np_1$ incident edges, since $p_1 > (1 + \eps/4)p$. In the remainder of the proof we show that the graph $G - H$ indeed contains a perfect matching.

  First, note that $G$ satisfies a series of properties:
  \begin{enumerate}[leftmargin=3.5em, font={\bfseries\itshape}, label=(M\arabic*)]
    \item\label{M-max-degree} the maximum degree of $G$ is at most $\Delta(G) \leq \log^3 n$,
    \item\label{M-num-edges} for all $X, Y \subseteq V$ we have $e_{G}(X, Y) \leq |X||Y|p_1 + c\sqrt{|X||Y|np_1}$,
    \item\label{M-codegree} for all $v, u \in V$ we have $|N_{G}(v) \cap N_{G}(u)| \leq 2$,
    \item\label{M-second-neighbourhoods} for all $v \in V$ we have $|N^{2}_{G}(v) \cap \TINY| \leq 1$ and $|N^{2}_{G}(v) \cap \ATYP| \leq L$.
    \item\label{M-size-atypical} $|\ATYP| \leq \frac{2n}{\log^3 n}$.
  \end{enumerate}
  We show that the properties hold in $G^{+}$, and hence in any subgraph $G \subseteq G^{+}$. Indeed, we have $p \leq 10 \log^2 n/n$ (by our assumptions on $m_0$) thus \ref{M-max-degree} follows from simple bounds on the binomially distributed random variable and a union bound over all vertices. Property \ref{M-num-edges} is given by Lemma~\ref{lem:gnp-edge-distribution} and \ref{M-codegree} by Lemma~\ref{lem:small-codegree}. Next, \ref{M-second-neighbourhoods} holds by our choice of $L$ and by Lemma~\ref{lem:bad-subgraphs} applied with $k = 2$ and $\eps/2$ as $\eps$. Note that we can indeed apply Lemma~\ref{lem:bad-subgraphs} with these parameters as $p_0 \geq (1 + \eps/2) \log n/(2n)$. Lastly, \ref{M-size-atypical} holds by Lemma~\ref{lem:atyp-size}.

  Consider graph $H$ which satisfies \eqref{eq:weaker}, and let $G' = G - H$ and $U := V \setminus \ATYP$. If $G$ has an odd number of vertices then we remove a vertex $v \in \TINY$ which is incident to $u \notin \TINY$, and if such a vertex does not exist, then we remove one from $V \setminus \TINY$. Owing to the property \ref{M-second-neighbourhoods} this does not decrease the degree of any vertex from $\TINY$ and decreases the degree of a vertex from $V \setminus \TINY$ by at most one. Note that all vertices $v \in U$ have degree in $G^{-}$ at least $(1 - \delta_{a}')np_0$, hence by \ref{M-second-neighbourhoods} we have:
  \begin{align*}
    \deg_{G'}(v, U) &\geq \deg_{G^{-}}(v) - \deg_{H}(v) - \deg_{G}(v, \ATYP) - 1 \\
    &\geq (1 - \delta_{a}')np_0 - (1/2 - \eps)np_1 - L - 1 \\
    &\geq (1/2 - \delta_{a}' + \eps/2)np_1 - L - 1 \\
    &\geq (1/2 + \eps/4)np_1,
  \end{align*}
  as $\delta_{a}' \leq \eps/64$ and $p_0 \geq p_1/(1 + \eps/2)$.

  Due to property \ref{M-second-neighbourhoods} it is straightforward to greedily find a matching saturating all vertices from $\TINY$. Next, we match all remaining vertices from $\ATYP$. Again we proceed greedily. Consider an arbitrary, still unmatched vertex $w \in \ATYP \setminus \TINY$. Note that $\deg_{G'}(w) \geq K - 1$ (here $-1$ comes from the fact that we might have removed one vertex from $G$ to achieve an even number of vertices). By \ref{M-second-neighbourhoods} and our choice of $K$ we have $|N^{2}_{G'}(w) \cap \ATYP| \leq L \leq K/2$. Therefore, at most $K/2$ neighbours of $w$ have been matched so far, thus there is an available one.

  Let $V(M)$ be the set of all vertices saturated in this partial matching. In particular $V(M)$ contains all vertices of $\ATYP$. Set $U_1 := U \setminus V(M)$ and let $G'' := G'[U_1]$ be the subgraph induced by the remaining vertices. Observe that by property \ref{M-second-neighbourhoods} the degree of any vertex in $U_1$ decreases by at most $2L$ with respect to its degree in $G'$, hence for all vertices $v \in U_1$ we have
  \[
    \deg_{G''}(v) \geq \deg_{G'}(v, U) - \deg_{G'}(v, V(M)) \geq (1/2 + \eps/4)np_1 - 2L \geq (1/2 + \eps/5)np_1.
  \]
  Take $U_1 = A \cup B$ to be a uniformly at random chosen balanced bipartition of the set $U_1$. We now define the set of all vertices which have significantly less than the expected degree in either $A$ or $B$ as
  \[
    D := \{ v \in U_1 \colon \deg_{G''}(v, A) < (1/2 + \eps/7)|A|p_1 \text{ or } \deg_{G''}(v, B) < (1/2 + \eps/7)|B|p_1 \}
  \]
  and call all such vertices {\em degenerate}.

  Firstly, we show that there are not many degenerate vertices. As $|A|, |B| \geq (1/2 - o(1))n$ by property \ref{M-size-atypical}, we have from Chernoff bounds for hypergeometrically distributed random variables that a fixed vertex $v$ is degenerate with probability
  \[
    \Pr[v \text{ is degenerate}] \leq 2 \Pr \left[\deg_{G''}(v, A) < (1/2 + \eps/7)|A|p_1 \right] \leq e^{-2\gamma np_1} \leq n^{-2\gamma},
  \]
  for some $\gamma=\gamma(\eps) > 0$. Consequently, by Markov's inequality there are at most $n^{1 - \gamma}$ degenerate vertices, i.e.\ $|D| \leq n^{1 - \gamma}$. Next we show that the degenerate vertices cannot be `too close' in $G''$.
  \begin{claim}\label{cl:no-small-pm-deg-tree}
    There exists a positive constant $L(\eps)$ such that a.a.s.\ for every $v \in U_1$ we have $|N^{2}_{G''}(v) \cap D| < L$.
  \end{claim}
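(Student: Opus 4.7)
The plan is to follow the union-bound strategy of Lemma~\ref{lem:bad-subgraphs}\ref{lem-no-atyp-neighbours}, but using the random balanced bipartition $U_1 = A \cup B$ in place of the random edges. Since $G'' \subseteq G^+$ and $\Delta(G^+) \le \log^3 n$ by~\ref{M-max-degree}, we have $|N^{2}_{G''}(v)| \le \log^6 n$, so it suffices to show that for any fixed $L$-subset $S \subseteq N^2_{G^+}(v)$ the probability (over the random bipartition) that $S \subseteq D$ is at most $n^{-\gamma L/2}$ for some $\gamma = \gamma(\eps) > 0$. Markov's inequality applied to $\binom{|N^2_{G''}(v) \cap D|}{L}$, followed by a union bound over the at most $n$ choices of $v$, will then close the argument provided $L$ is chosen large enough in terms of $\eps$ and $\gamma$.

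For a single $u \in U_1$, since $\deg_{G''}(u) \ge (1/2 + \eps/5)np_1$, the mean of the hypergeometric random variable $\deg_{G''}(u, A)$ exceeds the threshold $(1/2 + \eps/7)|A|p_1$ by at least $\Omega_{\eps}(np_1)$, so Chernoff for hypergeometrically distributed variables yields $\Pr[u \in D] \le n^{-2\gamma}$. For the joint event $S \subseteq D$, the crucial observation is that no single $u$ can fail on both sides simultaneously, because $\deg_{G''}(u, A) + \deg_{G''}(u, B) = \deg_{G''}(u) \ge (1/2 + \eps/5)np_1$ while the two thresholds sum to only $(1/2 + \eps/7)np_1$. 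Hence $\{S \subseteq D\}$ decomposes into a union of at most $2^L$ events, indexed by partitions $S = S_A \sqcup S_B$, corresponding to ``all vertices of $S_A$ fail on the $A$-side and all of $S_B$ fail on the $B$-side''.

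For each such partition, I would use the codegree bound~\ref{M-codegree} to replace every $N_{G''}(u)$ with a subset $T_u \subseteq N_{G''}(u)$ of size at least $\deg_{G''}(u) - 2L$ such that the sets $\{T_u\}_{u \in S}$ are pairwise disjoint; passing to $T_u$ only weakens each event but still leaves the thresholds exceeded by $\Omega_{\eps}(np_1)$. Since $A$ is a uniform balanced subset, the indicators $\mathbf{1}[w \in A]$ are negatively associated, so restricting attention to the $S_A$-side events---all decreasing in $A$ on the now-disjoint supports $T_u$---negative association together with per-factor Chernoff yields $\Pr[\bigcap_{u \in S_A} \{|T_u \cap A| < (1/2 + \eps/7)|A|p_1\}] \le n^{-2\gamma |S_A|}$. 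The symmetric argument produces the analogous bound with $|S_B|$; taking the minimum and summing over the $2^L$ partitions gives $\Pr[S \subseteq D] \le 2^L \cdot n^{-\gamma L} \le n^{-\gamma L/2}$ for $n$ large, as required.

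The main technical obstacle will be precisely this mixed-monotonicity issue: the $S_A$-side events are decreasing in $A$ whereas the $S_B$-side events are increasing, and negative association only gives a clean product bound when all events share the same monotonicity. The resolution is exactly the structural observation that $S_A$ and $S_B$ must be disjoint (by the minimum-degree bound in $G''$), after which one may discard the constraints on the smaller side and reduce to a single monotone family on disjoint supports. Everything else is a routine application of negative association of hypergeometric indicators together with Chernoff bounds for sampling without replacement.
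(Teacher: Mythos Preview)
Your proposal is correct and mirrors the paper's argument almost step for step: union bound over $v$ and $L$-subsets of $N^2_{G''}(v)$, pass to nearly disjoint neighbourhoods via the codegree bound~\ref{M-codegree}, pigeonhole to a subset of size at least $L/2$ all failing on the same side of the bipartition, and then a hypergeometric tail bound. The one cosmetic difference is that where you factorise the joint probability via negative association over the disjoint sets $T_u$, the paper simply takes the union $\bigcup_{u \in D_v'} N_u^*$ and applies a single Chernoff bound to the hypergeometric variable $|A \cap \bigcup_u N_u^*|$---this is slightly more elementary and sidesteps the mixed-monotonicity discussion entirely.
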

  \begin{proof}
    Consider some vertex $v \in V(G'')$ and a subset $D_v \subseteq N_{G''}^2(v)$ of size $L$. What is the probability that a random equipartition $A \cup B$ of $V(G'')$ makes all the vertices in $D_v$ degenerate?

    First, by property \ref{M-codegree} we know that for every vertex $u \in D_v$ there are at least
    \begin{equation}\label{eq:lower_bound_Dv}
      (1/2 + \eps/5)np_1 - 2L \geq (1/2 + \eps/6)np_1
    \end{equation}
    vertices in its neighbourhood in $G''$ which do not belong to the neighbourhood of any other vertex in $D_v \setminus \{u\}$. Let us denote such vertices with $N^*_u$.

    If all vertices in $D_v$ are degenerate, then there has to exist a subset $D_v' \subseteq D_v$ of size at least $L/2$ such that all vertices in $D_v'$ either have too few neighbours into $A$ or too few neighbours into $B$. By symmetry we may assume that this set is $A$. That is, we have $|A \cap N_u^*| \le (1/2 + \eps/7)|A|p_1$ for every $u \in D_v'$. Therefore,
    \[
      \Big| A \cap \bigcup_{u \in D_v'} N_u^* \Big| \le (1/2 + \eps/7)|A|p_1 |D_v'|.
    \]
    Let $\cE_{D_v'}$ denote the event of this happening. From \eqref{eq:lower_bound_Dv} we have
    \[
      \Big| \bigcup_{u \in D_v'} N_u^* \Big| \ge (1/2 + \eps/6)np_1 |D_v'|.
    \]
    Thus, as $A$ is a randomly chosen subset of linear size, Chernoff bounds for hypergeometrically distributed random variables show
    \begin{equation}\label{eq:one_vertex_probability}
      \Pr[\cE_{D_v'}] \leq e^{- \gamma L np_1},
    \end{equation}
    for some $\gamma = \gamma(\eps) > 0$ not depending on $L$. To summarise, we can bound the probability that all the vertices in $D_v$ are degenerate by applying \eqref{eq:one_vertex_probability} together with a union bound over all possible choices for $D_v'$. Finally, we take a union bound over all vertices $v$ and sets $D_v$. There are $n$ choices for $v$ and at most $\binom{(\log^3 n)^2}{L}$ choices for $D_v \subseteq N_{G''}^2(v)$ of size $L$ (which follows from \ref{M-max-degree}). The expected number of vertices for which there exists a set $D_v$ of size $L$ consisting solely of degenerate vertices is then at most
    \[
      n \cdot (\log n)^{6L} \cdot 2^{L} \cdot e^{- \gamma L np_1} = o(1),
    \]
    for $L > 0$ sufficiently large. In other words, for $L$ large enough no such set exists with probability $1 - o(1)$, as claimed.
  \end{proof}

  Similarly as before, we greedily construct a partial matching $M_D$ that saturates all the degenerate vertices. For an arbitrary degenerate vertex $v \in D$ we have $\deg_{G''}(v) \geq (1/2 + \eps/5)np_1$ and as there cannot be more than $L$ degenerate vertices in $N_{G''}^{2}(v)$ by Claim~\ref{cl:no-small-pm-deg-tree}, there is a vertex available to match $v$ to.

  Let $V(M_D)$ be the set of all vertices saturated in this partial matching and let $A' := A \setminus V(M_D)$ and $B' := B \setminus V(M_D)$. Again by Claim~\ref{cl:no-small-pm-deg-tree}, for all $v \in (A \cup B) \setminus V(M_D)$ we get
  \[
    \deg_{G''}(v, A') \geq (1/2 + \eps/7)|A|p_1 - 2L \geq (1/4 + \eps/16)np_1,
  \]
  and analogously
  \[
    \deg_{G''}(v, B') \geq (1/2 + \eps/7)|B|p_1 - 2L \geq (1/4 + \eps/16)np_1,
  \]
  as $|A|, |B| > (1/2 - o(1))n$. However, we might not have $|A'| = |B'|$ any more. Assume w.l.o.g.\ that $|A'| > |B'|$ and note that $|A'| \leq |B'| + |V(M_D)|$. In order to find a balanced bipartition we thus need to redistribute at most $|D| \leq n^{1 - \gamma}$ vertices, for some $\gamma > 0$. To achieve this we build a $2$-\emph{independent set} in $A'$, i.e.\ an independent set in which no two vertices have a common neighbour, of size at least $n^{1 - \gamma}$. Recall, from property \ref{M-max-degree} we have $\Delta(G) \leq \log^{3} n$, thus a straightforward greedy construction shows that there exists a $2$-independent set of size at least $n/\log^{7} n \geq n^{1 - \gamma}$ in $A'$, which is more than enough for our purposes.

  Let $A''$ and $B''$ be the two sets after moving the vertices belonging to the $2$-independent set into $B'$. Then $|A''| = |B''|$ (by construction) and vertices in $A''$ (respectively, $B''$) have degree at least $(1/4 + \eps/16)np_1 - 1 \ge (1/4 + \eps/17)np_1$ in $B''$ (respectively, $A''$), as we moved a $2$-independent set.

  It remains to find a perfect matching in the bipartite graph $G''[A'', B'']$. This is easily achieved through Hall's matching criteria (see, e.g.~\cite{diestel2000graph}). Recall its statement: if each subset $S \subseteq A''$ of size at most $|A''|/2$ has at least $|S|$ neighbours in $B''$ (and similarly for $S \subseteq B''$), then $G''[A'', B'']$ contains a perfect matching. We now verify that this is indeed the case.

  Let $S \subseteq A''$ be an arbitrary subset of size $s$. The number of edges between any two disjoint subsets of $G''$ of size $s \leq n/4$ is at most $s(np_1/4 + c\sqrt{np_1})$, by property \ref{M-num-edges}. On the other hand however, the minimum degree condition in $G''$ yields that the number of edges between $S$ and $B''$ is at least $s (1/4 + \eps/17)np_1$, which implies $|N(S)| > s$. This completes the proof of the proposition.
\end{proof}

Having Proposition \ref{prop:fixed-range-pm} at hand we now complete the proof. Let us first, as promised in the introduction, restate Theorem~\ref{thm:main-thm-pm} in terms of stronger resilience.
\begin{theorem}\label{thm:stronger-resilience-pm}
  Let $\eps > 0$ be a constant and consider the random graph process $\{G_i\}$. There exist positive constants $\delta_{t}(\eps)$, $\delta_{a}(\eps)$, and $K(\eps)$ such that a.a.s.\ for every $m \geq \frac{1 + \eps}{4} n\log n$ we have that the graph obtained from $G_m$ by deleting all isolated vertices is $(1/2 - \eps, \delta_t, 1, \delta_a, K)$-resilient with respect to containing a perfect matching.
\end{theorem}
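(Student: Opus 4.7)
My plan is to deduce Theorem~\ref{thm:stronger-resilience-pm} from Proposition~\ref{prop:fixed-range-pm} combined with the existing $(1/2-\eps)$-resilience result of Sudakov and Vu~\cite{SV08}, by splitting the range $m \ge \tfrac{1+\eps}{4}n\log n$ into a low piece and a high piece. I fix once and for all the constants $\delta_t,\delta_a,K$ provided by Proposition~\ref{prop:fixed-range-pm} for the parameter $\eps$ (these depend only on $\eps$), and pick a sufficiently large constant $C = C(\eps,\delta_t,\delta_a)$ such that: (a) Sudakov--Vu applies for $m \ge Cn\log n$ with failure probability at most $n^{-3}$ for each fixed $m$; and (b) by the discussion after the definition of tiny/atypical vertices (a routine Chernoff bound strengthening Lemma~\ref{lem:atyp-size}), $G_{n,m}$ a.a.s.\ has no tiny and no atypical vertices for the fixed $\delta_t,\delta_a$ whenever $m \ge Cn\log n$.

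For the low range $m \in [\tfrac{1+\eps}{4}n\log n, Cn\log n]$ I would cover by geometrically spaced overlapping windows: set $m_i = (1+\eps/4)^i \cdot \tfrac{1+\eps}{4}n\log n$ for $i=0,1,\ldots,I$ with $I = O_\eps(1)$. For each $i$, Proposition~\ref{prop:fixed-range-pm} applied with $m_0 := m_i$ asserts that a.a.s.\ every $G_m$ with $m \in [m_i,(1+\eps/4)m_i]$ has the desired $(1/2-\eps, \delta_t, 1, \delta_a, K)$-resilience (after deleting isolated vertices). A union bound over the constantly many windows yields the conclusion for all $m$ in the low range a.a.s. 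The key point is that $\delta_t,\delta_a,K$ returned by the proposition depend only on $\eps$ and not on $m_i$, so the same quintuple of resilience parameters is used uniformly in every window.

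For the high range $m \ge Cn\log n$ the notion of $(1/2-\eps,\delta_t,1,\delta_a,K)$-resilience reduces to ordinary $(1/2-\eps)$-resilience, since by the choice of $C$ we have $\TINY_{p,\delta_t}(G_m)=\ATYP_{p,\delta_a}(G_m)=\emptyset$ a.a.s.\ (and no vertices are isolated, so the ``delete isolated vertices'' operation is vacuous). Since $G_m$ is distributed as $G_{n,m}$, Sudakov--Vu gives failure probability at most $n^{-3}$ for each fixed $m$, and a union bound over the at most $\binom{n}{2} \le n^2$ values of $m$ in the high range still yields $o(1)$ total failure probability; the same style of union bound handles the ``no tiny/atypical/isolated vertex'' event simultaneously for all these $m$. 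Combining the two ranges finishes the proof. I do not expect any serious obstacle here: the genuine work is already contained in Proposition~\ref{prop:fixed-range-pm}, and what remains is bookkeeping through two union bounds, with the only care needed being to confirm that the constants coming out of the proposition are uniform in $m_i$.
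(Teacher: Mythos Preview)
Your proposal is correct and follows essentially the same approach as the paper: split into a low range covered by finitely many applications of Proposition~\ref{prop:fixed-range-pm} (your geometric windows versus the paper's arithmetic ones are interchangeable), and a high range $m \ge Cn\log n$ handled by Sudakov--Vu together with the observation that atypical vertices disappear, both combined via union bounds. The only point worth double-checking, which you already flagged, is that the constants $\delta_t,\delta_a,K$ returned by the proposition depend only on $\eps$ and not on $m_0$; this is indeed how the proposition is stated.
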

\begin{proof}
  Let $C$ be a sufficiently large constant. If $m \ge C n \log n$ then the random graph $G_{n,m}$ does not contain atypical vertices. Therefore, by \cite[Theorem 3.1]{SV08} the statement holds for every such (fixed) $m$ with probability at least $1 - e^{-\alpha \cdot C \log n}$, for some small constant $\alpha > 0$, thus a union bound implies that it holds for all $m \ge C n \log n$ simultaneously. As for the rest, consider intervals of the form
  \[
    \bigg[ \frac{1 + i\eps}{4} n \log n, \frac{1 + (i + 1)\eps}{4} n \log n \bigg)
  \]
  for $i \in \{1, \ldots, C_{\eps} \}$, where $C_{\eps}$ is such that the last interval contains $C n \log n$. For each interval the conclusion of Proposition~\ref{prop:fixed-range-pm} holds with probability $1 - o(1)$, thus a union bound shows that it a.a.s.\ holds for all intervals. This concludes the proof.
\end{proof}

\begin{proof}[Proof of Theorem~\ref{thm:main-thm-pm}]
  The assertion follows directly from Lemma~\ref{lem:resilience-implication} and Theorem~\ref{thm:stronger-resilience-pm}.
\end{proof}

\section{Hamiltonicity} \label{sec:HAM}

The so-called P\'{o}sa's \emph{rotation-extension} method introduced in \cite{posa1976hamiltonian} is nowadays a standard approach for constructing Hamilton cycles in random graphs, cf.\ also~\cite{ben2011resilience, FK15, frieze2008two, krivelevich2015long, KLS14}. The problem with having edge probabilities near (or even below) the connectivity threshold, is that then the random graph does not satisfy the expansion properties needed to apply the method. We go around this by partitioning the vertex set into typical and atypical vertices. The subgraph induced by the vertices of typical degree satisfies the expansion properties required to apply the rotation-extension technique and we thus get a Hamilton cycle in this subgraph by a standard approach. Our new contribution is to show how to extend this cycle to also contain all atypical vertices.

Towards this goal we make use of the facts given by Lemma~\ref{lem:bad-subgraphs}, that is that atypical vertices do not clump. This allows us to modify the rotation-extension procedure from~\cite{LS12} such that it only uses the expansion properties of typical vertices. In the next section we review some basic notions and state necessary lemmas which are then used in the subsequent section to derive Theorem \ref{thm:main-thm-ham}.

\subsection{Backbone graphs and boosters}

The central notion of the rotation-extension method is that of {\em boosters}. A booster is a non-edge in a graph $G$ whose existence would increase the length of a longest path in $G$ or close a Hamilton path to a Hamilton cycle. The idea behind the rotation-expansion technique is that a graph which is not Hamiltonian contains so many boosters that adding a few random edges is highly likely to contain one of them. The name `rotation-extension' comes from the way how boosters are obtained (\emph{rotation}) and the fact that a longest path of every non-Hamiltonian graph can be increased by adding an edge in place of a booster (\emph{extension}). We now make this precise.
\begin{definition}[Boosters]
  Given a graph $\Gamma$, we say that a non-edge $\{u, v\} \notin E(\Gamma)$ is a \emph{booster} with respect to $\Gamma$, if either $\Gamma + \{u, v\}$ is Hamiltonian or adding $\{u, v\}$ to $\Gamma$ increases the length of a longest path. For a vertex $v \in V(\Gamma)$, we denote by $B_\Gamma(v)$ the set of boosters associated with $v$:
  \[
    B_\Gamma(v) = \{ u \in V(\Gamma) \setminus (N_\Gamma(v) \cup \{v\}) \colon \{v, u\} \text{ is a booster}\}.
  \]
\end{definition}

A standard strategy for implementing the rotation-extension technique is to split the given graph into two graphs: a `backbone' graph responsible for obtaining boosters, and the remainder responsible for finding real edges corresponding to boosters. As we are dealing with subgraphs of random graphs, rather than random graphs themselves, it is convenient to capture the main pseudorandom properties which are used.
\begin{definition}[Backbone graph]\label{def:backbone}
  Given $\alpha, q \in (0, 1)$ and an integer $K \geq 1$, we say that a graph $\Gamma$ with $n$ vertices is an $(\alpha, K, q)$-{\em backbone graph} if there exists a partition of its vertex set $V(\Gamma) = U \cup W_1 \cup W_2$ such that the following holds:
  \begin{enumerate}[leftmargin=2.8em, font={\bfseries\itshape}, label=(P\arabic*)]
    \item\label{P-size-bad} $|W_1 \cup W_2| \leq \frac{n}{\log^2 n}$,
    \item\label{P-degree-bad} for every $v \in W_1$ we have $\deg_{\Gamma}(v) = 2$ and for every $u \in W_2$ we have $\deg_{\Gamma}(u) \geq 2K$,
    \item\label{P-second-W1} for every $v \in W_1$ we have $N_\Gamma^2(v) \cap W_1 = \varnothing$,
    \item\label{P-second-neighbourhood} for every $v \in V(\Gamma)$ we have $|N_{\Gamma}^2(v) \cap W_1| \leq 2$ and $|N_{\Gamma}^2(v) \cap W_2| \leq K$, and
    \item\label{P-expansion} for all $S \subseteq U$ we have
    \[
      |N_{\Gamma}(S)| \geq
        \begin{cases}
          |S| \sqrt{nq}, & \text{if } |S| < K/q, \\
          (1/2 + \alpha/2)n, & \text{if } |S| \geq K/q.
        \end{cases}
    \]
  \end{enumerate}
\end{definition}

The role of the sets $W_1$ and $W_2$ and properties \ref{P-size-bad}--\ref{P-second-neighbourhood} is to capture properties of tiny and atypical vertices in a random graph with density $q$. Property \ref{P-expansion} states that the subgraph induced by typical vertices has good expansion properties.

The next lemma shows that a backbone graph contains many boosters. It can be proven similarly as \cite[Lemma 3.2]{LS12}, with slight modifications which allow us to deal with the vertices in $W_1$ and $W_2$. We defer the proof to Section~\ref{sec:rotation-extension}.
\begin{lemma}\label{lem:boosters}
  For every $\alpha, \delta > 0$, there exists a positive constant $K(\alpha)$ such that the following holds for $q \geq \delta \log n/n$. Let $G_q$ be a graph with $n$ vertices satisfying the property of Lemma~\ref{lem:gnp-edge-distribution} for $q$ (as $p$) and some constant $c'$ (as $c$), and $H_q \subseteq G_q$ a graph with $\Delta(H_q[U]) \leq (1/2 - 2\alpha)nq$, for some $U \subseteq V(G_q)$. Then an $(\alpha, K, q)$-backbone graph $\Gamma$ with a witness partition $V(\Gamma) = U \cup W_1 \cup W_2$, such that $\Gamma[U] = G_q[U] - H_q[U]$, is either Hamiltonian or there are at least $(1/2 + \alpha)n$ vertices $v \in U$ such that $|B_{\Gamma}(v) \cap U| \ge (1/2 + \alpha)n$.
\end{lemma}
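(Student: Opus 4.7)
The plan is to adapt Pósa's rotation-extension technique, following the approach of \cite{LS12}, to the backbone setting. Suppose towards a contradiction that $\Gamma$ is not Hamiltonian. First I would argue that any longest path $P$ in $\Gamma$ covers almost all of $V(\Gamma)$: if $|V(\Gamma) \setminus V(P)|$ were large, applying the expansion \ref{P-expansion} to the missing portion inside $U$ would allow a DFS-style extension of $P$, contradicting maximality. Since $|W_1 \cup W_2| \leq n / \log^2 n$ by \ref{P-size-bad}, almost all missing vertices lie in $U$, so the argument is not obstructed by the low-degree or atypical vertices. A bounded sequence of initial rotations (using \ref{P-second-W1} and \ref{P-second-neighbourhood} to control the sparse occurrence of $W_1 \cup W_2$) then lets us assume both endpoints of $P$ lie in $U$.

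Next I would fix the left endpoint $u_0 \in U$ and perform Pósa rotations on the right, tracking the set $T$ of reachable right endpoints. The classical rotation lemma gives that, absent a booster, $N_\Gamma(T) \subseteq T \cup T^+$, where $T^+$ denotes the $P$-neighbours of $T$, and thus $|N_\Gamma(T)| \leq 3|T|$. Applying \ref{P-expansion} to $T \cap U$ in the small-set regime, together with $|T| \leq |T \cap U| + n / \log^2 n$, forces either a booster to appear or $|T \cap U|$ to cross the threshold $K/q$. Once in the large-set regime, the strong bound $|N_\Gamma(T \cap U)| \geq (1/2 + \alpha/2)n$, combined with an iterated rotation argument of the kind used in \cite{LS12}, yields $|T \cap U| \geq (1/2 + \alpha)n$ (or, again, a booster is discovered, which we can use directly).

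Symmetrically, for each $v \in T \cap U$, fixing $v$ on the right and repeating the rotation on the left produces a set $T_L(v)$ with $|T_L(v) \cap U| \geq (1/2 + \alpha)n$. For each $u' \in T_L(v) \cap U$ there is a longest path with endpoints $u'$ and $v$, so the non-edge $\{u', v\}$ is a booster: either it closes a Hamilton cycle directly, or the resulting cycle can be opened via a neighbour in $V(\Gamma) \setminus V(P)$ (which exists unless $P$ is already a Hamilton path) to obtain a strictly longer path. Hence $u' \in B_\Gamma(v) \cap U$ whenever $u' \notin N_\Gamma(v) \cup \{v\}$; since $\deg_\Gamma(v) = O(nq) = o(n)$ for $v \in U$, we obtain $|B_\Gamma(v) \cap U| \geq (1/2 + \alpha)n - o(n)$. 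A cosmetic reduction of $\alpha$ then yields the required $(1/2 + \alpha)n$ vertices, each with at least $(1/2 + \alpha)n$ boosters in $U$.

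The hard part will be the second step. Two issues need care: the rotations may land at or pass through vertices in $W_1 \cup W_2$, and squeezing the sharp constant $(1/2 + \alpha)$ rather than the naive $(1/6)$ coming from a one-shot application of $|N_\Gamma(T)| \leq 3|T|$. The first issue is handled by \ref{P-second-W1} and \ref{P-second-neighbourhood}, which ensure that no small neighbourhood contains many atypical vertices, so rotations effectively stay within $\Gamma[U]$ up to bounded additive error and the analogue of Pósa's lemma still applies. The second requires exploiting the full strength of the $(1/2 + \alpha/2)n$-expansion in \ref{P-expansion} through an iterative refinement of $T$, rather than a single invocation of the rotation inclusion, mirroring the corresponding step in \cite{LS12}.
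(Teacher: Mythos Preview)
Your overall architecture---normalise endpoints into $U$, grow a set of endpoints via P\'osa rotations using the small-set regime of \ref{P-expansion}, then push past $(1/2+\alpha)n$, then repeat on the other side---matches the paper's three-phase structure. The gap is in the middle step, and it is a real one.

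You write that the sharp bound $|T\cap U|\ge (1/2+\alpha)n$ will follow by ``exploiting the full strength of the $(1/2+\alpha/2)n$-expansion in \ref{P-expansion} through an iterative refinement of $T$''. But \ref{P-expansion} only ever promises $|N_\Gamma(S)|\ge(1/2+\alpha/2)n$; no amount of iterating that inequality against $|N_\Gamma(T)|\le 3|T|$ gets you past roughly $n/6$. The mechanism in \cite{LS12} (and in the paper's Phase~2) is \emph{not} an expansion argument at all: it partitions the path into intervals, and for each interval argues that if few new endpoints are created then many edges from the current endpoint set $X$ into that interval must be \emph{absent} from $\Gamma[U]$. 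Since $\Gamma[U]=G_q[U]-H_q[U]$ and $G_q$ satisfies the edge-distribution property of Lemma~\ref{lem:gnp-edge-distribution}, those missing edges are forced into $H_q$, and summing over intervals contradicts $\Delta(H_q[U])\le(1/2-2\alpha)nq$. Your proposal never invokes $G_q$, $H_q$, the edge-distribution hypothesis, or the degree bound on $H_q$---which are the central hypotheses of the lemma---so as written it cannot reach the conclusion. You need to replace the vague ``iterative refinement'' with this edge-counting argument against $H_q$.
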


Note that rather than asking for $\Gamma = G_q - H_q$, we only require the subgraph of $\Gamma$ induced by $U$ to be given by $G_q[U] - H_q[U]$. The way we exploit such a weaker requirement becomes apparent in the next section. As the notation $q$ in the previous lemma already indicates, we obtain an $(\alpha, K, q)$-backbone graph $\Gamma$ by considering a random subgraph with density $q$ of some graph $G$.

The next lemma states that given a sufficiently sparse graph $\Gamma$ with many boosters, a random graph with appropriate density is likely to contain plenty of edges corresponding to them.
\begin{lemma}\label{lem:extension}
  For every $\alpha > 0$ there exists a positive constant $\mu(\alpha)$ such that for $p \geq \log n/(3n)$, the random graph $G \sim G_{n, p}$ a.a.s.\ satisfies the following. Let $\Gamma$ be a graph with $e(\Gamma) \leq \mu n^2 p$ and $U \subseteq V(G)$ a subset of vertices such that $\Gamma[U] \subseteq G$. If there are at least $(1/2 + \alpha)n$ vertices $v \in U$ such that $|B_{\Gamma}(v) \cap U| \geq (1/2 + \alpha)n$, then there exists a vertex $v \in U$ satisfying $|N_{G}(v, B_\Gamma(v) \cap U)| > np/2$.
\end{lemma}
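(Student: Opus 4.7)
The plan is to argue by contradiction: suppose some admissible pair $(\Gamma, U)$ fails the conclusion, so that $|N_G(v) \cap B_\Gamma(v) \cap U| \le np/2$ for every $v \in U$. Let $T \subseteq U$ with $|T| \ge (1/2 + \alpha) n$ be the vertices witnessing the hypothesis, and write $B_v := B_\Gamma(v) \cap U$, so $|B_v| \ge (1/2 + \alpha) n$. Summing the failure inequality over $T$ yields
\[
  S := \sum_{v \in T} |N_G(v) \cap B_v| \;\le\; |T| \cdot \tfrac{np}{2}.
\]

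The crucial structural observation is that, by the very definition of $B_\Gamma(v)$, every pair $\{v, u\}$ with $u \in B_v$ is a non-edge of $\Gamma$. Consequently, conditional on $\Gamma[U] \subseteq G$, the indicators $\mathbf{1}[\{v, u\} \in G]$ appearing in $S$ are independent $\mathrm{Bernoulli}(p)$ random variables, making $S$ a weighted sum of independent Bernoullis with weights in $\{1, 2\}$ (each unordered pair contributes once for each of the two orderings occurring in the relation $\{(v, u) : v \in T,\ u \in B_v\}$). Its conditional mean is
\[
  p \sum_{v \in T} |B_v| \;\ge\; (\tfrac{1}{2} + \alpha)^{2}\, n^{2} p,
\]
which exceeds the failure upper bound $|T| \cdot np/2 \le (1/2 + \alpha) n \cdot np/2$ by $\Omega_\alpha(n^2 p)$. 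Bernstein's inequality then bounds the conditional bad probability by $\exp(-c_\alpha n^2 p)$ for some $c_\alpha = c_\alpha(\alpha) > 0$.

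The concluding step is a union bound over admissible pairs $(\Gamma, U)$, weighted by $\Pr[\Gamma[U] \subseteq G] = p^{e(\Gamma[U])}$, so that only the expected number of copies of $\Gamma[U]$ inside $G$ contributes to the count. The main obstacle is keeping $\mu = \mu(\alpha)$ a genuine constant independent of $n$ in the near-threshold regime $p \asymp \log n / n$, where a naive count of graphs with $\le \mu n^2 p$ edges contributes a factor $\exp(O(\mu n (\log n)^2))$ that must be dominated by the Bernstein tail $\exp(-\Omega_\alpha(n \log n))$. Absorbing the $p^{e(\Gamma[U])}$ factor reduces the contribution of the inside-$U$ edges of $\Gamma$ to $\exp(O(\mu n^2 p))$, and since the pairs counted in $S$ all lie in $\binom{U}{2}$ the outside-$U$ edges of $\Gamma$ can be handled by a separate bookkeeping that is independent of the Bernstein step; this is what enables $\mu$ to be taken as a positive constant depending only on $\alpha$.
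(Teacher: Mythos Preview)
Your concentration step takes a different route from the paper. Where you aggregate $S = \sum_{v \in T} |N_G(v) \cap B_v|$ and invoke Bernstein on a weighted Bernoulli sum, the paper instead passes to a subset $A' \subseteq T$ of size $\alpha n/2$ and works with $B'(v) := (B_\Gamma(v) \cap U) \setminus A'$. The point of shrinking to $A'$ is that it is then disjoint from every $B'(v)$, so the edge sets $\{v\} \times B'(v)$ for distinct $v \in A'$ are pairwise disjoint and the events $\{|N_G(v) \cap B'(v)| \le np/2\}$ are genuinely independent; one multiplies the per-vertex Chernoff bounds to reach $e^{-\Omega_\alpha(n^2 p)}$ without tracking any weight-$2$ overlaps. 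Both arguments land in the same place, but the paper's avoids the bookkeeping of doubly-counted pairs and needs nothing beyond Chernoff.

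There is a real slip in your deviation arithmetic. You write $|T|\cdot np/2 \le (1/2+\alpha)n\cdot np/2$, but your hypothesis is $|T| \ge (1/2+\alpha)n$, so the inequality points the wrong way; if $|T|$ were close to $n$ your lower bound $(1/2+\alpha)^2 n^2 p$ on $\mathbb{E}[S]$ would actually be \emph{smaller} than the failure threshold $|T|\,np/2 \approx n^2 p/2$. The repair is easy---replace $T$ by a subset of size exactly $\lceil(1/2+\alpha)n\rceil$, or compare $\mathbb{E}[S] \ge (1/2+\alpha)|T|np$ against $|T|\,np/2$ to extract a gap of $\alpha|T|np$---but as written the step fails.

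Your final paragraph on the union bound is not yet a proof. Saying the outside-$U$ edges ``can be handled by a separate bookkeeping'' is exactly the point that needs work: a raw count of graphs with $\le \mu n^2 p$ edges, with no compensating $p$-factor, contributes $\exp(\Theta(\mu n^2 p \log(1/p)))$, which at $p \asymp \log n/n$ is $\exp(\Theta(\mu n\log^2 n))$ and is not killed by $e^{-\Omega_\alpha(n^2 p)}$ for any constant $\mu$. The paper's proof carries out the union bound explicitly as $\sum_{t \le \mu n^2 p} \binom{\binom{n}{2}}{t} p^{t} \le \exp\bigl(O(\mu\log(1/\mu)\,n^2 p)\bigr)$, i.e.\ weighting \emph{every} edge of the enumerated graph by $p$; you should either adopt that computation or make precise what your ``separate bookkeeping'' is.
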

The proof of the lemma is fairly standard and goes along the lines of the proof of \cite[Lemma 3.5]{LS12}; we include it for completeness in Section~\ref{sec:rotation-extension}.

\subsection{Proof of Theorem~\ref{thm:main-thm-ham}} \label{sec:main_ham}

Similarly as in the case of perfect matchings, instead of proving Theorem~\ref{thm:main-thm-ham} directly, we first show a proposition which considers only a small range of $m$, until $m = C n\log n$.
\begin{proposition}\label{prop:fixed-range-ham}
  For every constant $\eps > 0$ and integer $m_0 \geq (1 + \eps) \frac{n \log n}{6}$ there exist positive constants $\delta_{t}(\eps)$, $\delta_{a}(\eps)$, and $K(\eps)$ such that the random graph process $\{G_i\}$ a.a.s.\ has the following property: For every integer $m_0 \leq m \leq (1 + \eps/6)m_0$, the $2$-core of $G_m$ is $(1/2 - \eps, \delta_{t}, 2, \delta_{a}, K)$-resilient with respect to having a Hamilton cycle.
\end{proposition}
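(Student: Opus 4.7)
The plan is to mirror the sandwich technique of Proposition~\ref{prop:fixed-range-pm} but replace the Hall-type finale by P\'osa's rotation--extension driven by Lemmas~\ref{lem:boosters} and~\ref{lem:extension}. Choose $p_0, p'$ so that, with $G^- \sim G_{n,p_0}$, $G^+ := G^- \cup G_{n,p'}$, and $p_1 := 1 - (1-p_0)(1-p')$, a.a.s.\ $e(G^-) \leq m_0$ and $e(G^+) \geq (1+\eps/6)m_0$ (concretely $p_0 \approx (1-\eps/20)\tfrac{2m_0}{n(n-1)}$ and $p' \approx \tfrac{\eps}{10} p_0$, so $p_1 = (1+O(\eps))p_0$). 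By the standard random-edge-order coupling, the process may be assumed to satisfy $G^- \subseteq G_m \subseteq G^+$ for every $m$ in the range. Applying Lemma~\ref{lem:bad-subgraphs} with $k = 3$ (valid since $p_0 \ge (1+\eps/2)\log n/(3n)$) together with Lemmas~\ref{lem:atyp-size}, \ref{lem:gnp-edge-distribution}, \ref{lem:small-codegree}, and the a.a.s.\ conclusion of Lemma~\ref{lem:extension} applied to $G^+$, I obtain a ``good event'' $\mathcal{E}$ on $(G^-, G^+)$ on which all these properties hold simultaneously. Fixing thresholds $\delta_t', \delta_a'$ dictated by these lemmas, set $\TINY := \TINY_{p_0, \delta_t'}(G^-) \cup \TINY_{p_1, \delta_t'}(G^+)$ and $\ATYP$ analogously; then $\delta_t, \delta_a, K$ in the statement can be chosen so that $\TINY_{p_m, \delta_t}(G_m) \subseteq \TINY$ and $\ATYP_{p_m, \delta_a}(G_m) \subseteq \ATYP$ for every $m$ in the range.

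Fix any such $m$, let $G$ be the $2$-core of $G_m$, and let $H \subseteq G$ satisfy the $(1/2 - \eps, \delta_t, 2, \delta_a, K)$-resilience condition. To remove the dependence on $m$ I extend the adversary via $H' := H \cup (G^+[V(G)] \setminus G_m[V(G)])$, so that $G^+[V(G)] - H' = G - H$; the a.a.s.\ bound $\Delta(G^+ \setminus G_m) = O(\eps \log n)$ together with the resilience condition on $H$ give $\deg_{H'}(v) \leq (1/2 - \eps/3) n p_1$ for every $v \in V(G) \setminus \ATYP$. Next I construct the backbone graph $\Gamma_0$ (cf.\ Definition~\ref{def:backbone}). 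Let $W_1 \subseteq V(G) \cap \TINY$ consist of those tiny vertices whose second $G^+$-neighborhood contains no further tiny vertex (by Lemma~\ref{lem:bad-subgraphs}, all but $O(|\TINY|)$ tiny vertices qualify); put the remaining tiny vertices, together with $V(G) \cap (\ATYP \setminus \TINY)$, into $W_2$, and set $U := V(G) \setminus (W_1 \cup W_2)$. For each $v \in W_1$ reserve two incident edges of $G - H$ (they exist since $v \in V(G)$ and $K_t = 2$), and for each $v \in W_2$ reserve enough incident edges of $G - H$ to satisfy P2 (possible by the $K_a = K$ clause of the resilience condition, and by the high $G$-degree of any $v \in W_2$ that happens to be typical in $G_m$). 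Let $\Gamma_0$ be the graph $(G^- - H)[U]$ together with all reserved edges. One then verifies that $\Gamma_0$ is an $(\alpha, K', p_0)$-backbone graph for appropriate $\alpha = \Omega(\eps)$ and $K' = \Omega(K)$: P1 and P2 hold by construction and Lemma~\ref{lem:atyp-size}, P3 by the choice of $W_1$, P4 by Lemma~\ref{lem:bad-subgraphs}, and the crucial expansion P5 reduces to the estimate
\[
\deg_{\Gamma_0[U]}(v) \;\geq\; \deg_{G^-}(v) - |N_{G^-}(v) \cap \ATYP| - \deg_H(v) \;\geq\; (1-\delta_a')np_0 - L - (1/2 - \eps/2)np_0 \;\geq\; (1/2 + \eps/4) n p_0
\]
for every $v \in U$, combined with the pseudorandom edge-count bound of Lemma~\ref{lem:gnp-edge-distribution}.

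With $\Gamma_0$ in place I iterate the rotation--extension. If $\Gamma_i$ is Hamiltonian then so is $G - H$, since by construction every edge of $\Gamma_i$ lies inside $G - H$ throughout; otherwise, Lemma~\ref{lem:boosters} applied with $G_q = G^+$, $q = p_1$, and $H_q = G^+[U] \setminus \Gamma_i[U]$ (its max-degree hypothesis on $H_q$ follows from the $(1/2 - \eps/3)$-bound on $\deg_{H'}$ and the $O(\eps \log n)$ bound on $\Delta(G^+ \setminus G^-)$) yields at least $(1/2 + \alpha)n$ vertices $v \in U$ with $|B_{\Gamma_i}(v) \cap U| \geq (1/2 + \alpha)n$. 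Lemma~\ref{lem:extension} applied to $G^+$ --- whose edge-count hypothesis $e(\Gamma_i) \leq \mu n^2 p_1$ holds throughout since $e(\Gamma_i) = O(n \log n)$ --- then produces a vertex $v$ with $|N_{G^+}(v) \cap B_{\Gamma_i}(v) \cap U| > n p_1/2$; subtracting the $\leq (1/2 - \eps/3) n p_1$ edges of $H'$ at $v$ leaves $\Omega(\eps n p_1)$ real booster edges inside $G^+[V(G)] - H' = G - H$. Adding any one of them to $\Gamma_i$ gives $\Gamma_{i+1}$ with strictly longer longest path, so after at most $n$ iterations $\Gamma$ is Hamiltonian. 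The proposition follows on $\mathcal{E}$. I expect the main obstacle to be the simultaneous verification of P1--P5 for $\Gamma_0$, most notably the expansion P5, under the compounding constraints that $\TINY$ and $\ATYP$ are defined via $G^\pm$ rather than $G_m$ (so these sets may contain vertices of perfectly ``typical'' $G_m$-degree, requiring careful book-keeping across different degree regimes) and that the extended adversary $H'$ must remain below the $(1/2 - \eps/3)$-threshold so that Lemma~\ref{lem:extension} delivers a booster surviving inside $G - H$ rather than merely inside the larger $G^+$.
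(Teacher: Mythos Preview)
Your overall architecture is right---sandwich $G_m$ between $G^-$ and $G^+$, build a backbone graph, and iterate rotation--extension via Lemmas~\ref{lem:boosters} and~\ref{lem:extension}---but there are two genuine gaps.

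\textbf{The backbone is too dense for Lemma~\ref{lem:extension}.} You take $\Gamma_0[U] = (G^- - H)[U]$ and then claim the hypothesis $e(\Gamma_i) \le \mu n^2 p_1$ holds ``since $e(\Gamma_i) = O(n\log n)$''. But $n^2 p_1$ is itself $\Theta(n\log n)$, so the constant matters: your own lower bound $\deg_{\Gamma_0[U]}(v) \ge (1/2+\eps/4)np_0$ forces $e(\Gamma_0) \ge (1/4+\eps/8-o(1))\,n^2 p_0$, whereas $\mu = \mu_{\ref{lem:extension}}(\alpha)$ is a \emph{small} constant (in the proof of Lemma~\ref{lem:extension} one needs $\mu\log(1/\mu)$ small compared to an $\Omega_\alpha(1)$ term). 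So $e(\Gamma_0) \gg \mu n^2 p_1$ and Lemma~\ref{lem:extension} does not apply. The paper repairs this by first passing to a sparse random subgraph: keep every edge of $G^-$ independently with probability $\mu/4$, set $q=(\mu/4)p_0$, and build the backbone out of $G_q[U]-H_q[U]$. This guarantees $e(\Gamma)\le (\mu/2)n^2p_0$ while still providing enough expansion for \ref{P-expansion}; the price is a new set $D$ of ``degenerate'' vertices where the sparsification went wrong, which must be absorbed into $W_2$ (Claim~\ref{cl:no-small-ham-deg-tree}).

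\textbf{Tiny vertices cannot simply be pushed into $W_2$.} To enforce \ref{P-second-W1} you move every tiny vertex with another tiny vertex in its second neighbourhood into $W_2$. But \ref{P-degree-bad} requires $\deg_\Gamma(u)\ge 2K'$ for $u\in W_2$, while for a tiny vertex the resilience condition only guarantees $\deg_{G-H}(u)\ge 2$. Near the lower end $m_0\approx (1+\eps)n\log n/6$ such close-by tiny pairs genuinely occur (Lemma~\ref{lem:bad-subgraphs} with $k=3$ only bounds their number by~$2$, not by~$0$), so \ref{P-degree-bad} fails for these vertices. The paper handles this not by reassignment but by \emph{contracting} every path of length at most two between tiny vertices into a single new $W_1$-vertex, checking via \ref{H-third-neighbourhood} and \ref{H-no-tiny-cycles} that the contraction is well-defined and that a Hamilton cycle in the contracted graph unrolls to one in $G-H$. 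Without this (or an equivalent device) your $\Gamma_0$ is not a backbone graph in the claimed range of $m_0$.
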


The general strategy of the proof is similar as in the case of perfect matchings. We first sample two random graphs $G^{-} \sim G_{n, p_0}$ and $G_{n, p'}$ and look at their union $G^{+} = G^{-} \cup G_{n, p'}$. The choice of densities $p_0$ and $p'$ is such that for all values of $m \in [m_0, (1 + \eps/6)m_0]$ the graph $G_m$ is a.a.s.\ `in between', that is $G^{-} \subseteq G_m \subseteq G^{+}$. \

Let $G$ denote the $2$-core of $G_m$ and let $H$ be a graph removed by the adversary, as in Definition~\ref{def:complicated_resilience}. We split the graph $G - H$ into a sparse graph $\Gamma'$ which mostly contains edges from $G^{-} - H$ (edges outside of $G^-$ are borrowed to handle atypical vertices), and the rest $G - H - \Gamma'$. Ideally, we would like that $\Gamma'$ is a backbone graph. For $m$ close to $n\log n/6$, the graph $G$ contains short paths between two tiny vertices, some of which exist in $\Gamma'$ as well preventing it from satisfying \ref{P-second-W1}. We circumvent this by constructing a new graph $\Gamma$ from $\Gamma'$ by \emph{contracting} all paths of length at most two between tiny vertices. As tiny vertices do not clump, this has only a mild impact on the structure of the graph and all the other properties remain satisfied. Due to these contractions the graph $\Gamma$ is not a subgraph of $G - H$ any more, however, the subgraph induced by typical vertices which are not a part of contracted paths, is. This allows us to apply Lemma \ref{lem:boosters} to conclude that such $\Gamma$ is either Hamiltonian in which case we are done by `unrolling' the contracted vertices, or contains many boosters between `real' vertices of $G$. Using Lemma \ref{lem:extension}, we subsequently show that many such boosters appear in $G - H$ which allows us to complete a Hamilton cycle.

We point out that by requiring $m$ just a bit larger, that is $m \geq (1 + \eps)n\log n/4$, would be enough for \ref{P-second-W1} to hold already in $\Gamma'$ and the contraction process would not be necessary.
\begin{proof}
  Let $p_0 = (1 - \eps/16) m_0 / \binom{n}{2}$, $p' = (\eps/2) p_0$, and let $G^+$ be the union of independent random graphs $G^- \sim G_{n, p_0}$ and $G_{n, p'}$. Then $G^{+}$ is distributed as $G_{n, p_1}$, where $p_1 = 1 - (1 - p_0)(1 - p')$. By Lemma~\ref{lem:gnp-edge-distribution} we a.a.s.\ have both $e(G^-) \leq m_0$ and $e(G^+) \geq (1 + \eps/6)m_0$.

  Let $\delta_{t}' = \delta_{{\ref{lem:bad-subgraphs}}}(\eps/2)$, $\delta_{a}' = \min\{ \eps/64, \delta_{{\ref{lem:bad-subgraphs}}}(\eps/2) \}$, $c = c_{\ref{lem:gnp-edge-distribution}}$, $L = \max\{ L_{\ref{lem:bad-subgraphs}}(\eps/2), L_{\ref{cl:no-small-ham-deg-tree}}(\eps) \}$, $\mu = \mu_{\ref{lem:extension}}(\eps/4)$, $c' = c'(c, \mu)$ sufficiently large (cf.~\eqref{eq:gq-edge-concentration}), and set the constants given in the statement of the proposition as follows:
  \[
    \delta_{t} = \delta_{t}'/40, \quad \delta_{a} = \max\{ \eps/4, 16\delta_{a}' \}, \quad \text{ and } \quad K = \max\{ 40L, \left(64c' / \eps\right)^{2} \}.
  \]

  For the rest of the proof consider some $m_0 \le m \le (1 + \eps/6)m_0$. Let $G$ be the $2$-core of $G_m$ and set $V = V(G)$. Note that $G$ can be obtained using the following procedure: initially set $G = G_m$, and as long as $G$ contains a vertex of degree at most one, remove it. Let $R$ denote the set of removed vertices. The definition of the procedure then implies that there are at most $|R|$ edges incident to $R$ in $G_m$. By Lemma~\ref{lem:bad-subgraphs} applied to $G^+$ (with $k = 3$), each vertex has at most two neighbours in $\TINY_{p_0, \delta_{t}'}(G^-)$ thus no vertex which is not in $\TINY_{p_0, \delta'_{t}}(G^-)$ can ever be removed. This also implies that the degree of every vertex decreases by at most $2$. As thus $R \subseteq \TINY_{p_0, \delta_{t}'}(G^-)$, Lemma \ref{lem:atyp-size} implies $|R| = O(n / \log^3 n)$.

  Similarly as in the proof of Proposition \ref{prop:fixed-range-pm}, we define $\TINY$ and $\ATYP$ as follows:
  \begin{align*}
    \TINY &:= (\TINY_{p_0, \delta_{t}'}(G^-) \cup \TINY_{p_1, \delta_{t}'}(G^+)) \cap V, \\
    \ATYP &:= (\ATYP_{p_0, \delta_{a}'}(G^-) \cup \ATYP_{p_1, \delta_{a}'}(G^+)) \cap V.
  \end{align*}
  Using the previous observation on the number of removed vertices and edges, it is easy to see that for all $m_0 \leq m \leq (1 + \eps/6)m_0$ we have
  \[
    \TINY_{p, \delta_{t}}(G) \subseteq \TINY \qquad \text{and} \qquad \ATYP_{p, \delta_{a}}(G) \subseteq \ATYP,
  \]
  where $p = e(G) / \binom{v(G)}{2}$. Consider a graph $H \subseteq G$ such that
  \[
    \deg_H(v) \leq
    \begin{cases}
      \deg_{G}(v) - 2, & \text{if } v \in \TINY, \\
      \deg_{G}(v) - K, & \text{if } v \in \ATYP \setminus \TINY, \\
      (1/2 - \eps)np_1, & \text{otherwise}.
    \end{cases}
  \]
  We show that $G - H$ contains a Hamilton cycle, which in turn implies that $G$ is $(1/2 - \eps, \delta_{t}, 2, \delta_{a}, K)$-resilient with respect to Hamiltonicity (as in the case of perfect matchings).

  The key to our proof is the fact that tiny and atypical vertices cannot be clumped up in $G$, captured by the following properties:
  \begin{enumerate}[leftmargin=3.5em, font={\bfseries\itshape}, label=(H\arabic*)]
    \item\label{H-third-neighbourhood} for all $v \in V$ we have $|N^{3}_{G}(v) \cap \TINY| \leq 2$ and $|N^{2}_{G}(v) \cap \ATYP| \leq L$,
    \item\label{H-no-tiny-cycles} every cycle $C \subseteq G$ of size at most $6$ contains at most one vertex from $\TINY$.
  \end{enumerate}
  By Lemma~\ref{lem:bad-subgraphs} applied for $k = 3$ and $\eps/2$ as $\eps$, one easily checks that properties \ref{H-third-neighbourhood} and \ref{H-no-tiny-cycles} hold in $G^+$, and hence in any subgraph $G \subseteq G^+$. Note that we can indeed apply Lemma~\ref{lem:bad-subgraphs} with such parameter, as $p_0 \geq (1 + \eps/2) \log n/(3n)$.

  In the first step of the proof we carefully construct a backbone graph $\Gamma$. Set $q = (\mu/4)p_0$, where $\mu = \mu_{\ref{lem:extension}}(\eps/4)$, and consider a graph $G_q$ obtained by keeping every edge of $G^-$ independently with probability $\mu/4$. Moreover, remove from $G_q$ all vertices which are not in $V$. By the discussion from the beginning of the proof, removing such vertices decreases the degree of every vertex in $G_q$ by at most $2$. We now show that $G_q$ a.a.s.\ has certain properties.

  As $G^{-}$ a.a.s.\ satisfies the assertion of Lemma \ref{lem:gnp-edge-distribution}, we also have that $G_q$ a.a.s.\ satisfies
  \begin{equation} \label{eq:gq-edge-concentration}
    |e_{G_q}(X, Y) - |X||Y|q| \le c' \sqrt{|X||Y|nq},
  \end{equation}
  for every two subsets $X, Y \subseteq V$, for some sufficiently large constant $c'$ depending on $c$ and $\mu$. Next, we claim that most of the vertices in $V \setminus \ATYP$ have degree at least $(1 - 2\delta_{a}')nq$ in $G_q$, as well as degree at most $(1/2 - \eps/2)nq$ in $H_q = G_q \cap H$. Fix $v \in V \setminus \ATYP$. By Chernoff bounds we get
  \[
    \Pr[\deg_{G_q}(v) < (1 - 2\delta_{a}')nq] + \Pr[\deg_{H_q}(v) > (1/2 - \eps/2)nq] \leq e^{-\Omega_{\eps}(nq)} \leq n^{-2\gamma},
  \]
  for some $\gamma = \gamma(\eps) > 0$. Therefore, by Markov's inequality there are at most $n^{1 - \gamma}$ vertices which satisfy at least one of these two conditions. We call all such vertices {\em degenerate} and denote them by $D$. Similarly as atypical vertices, such vertices cannot be clumped in $G$, and thus neither in $G_q$. This is formalised in the following claim.
  \begin{claim}\label{cl:no-small-ham-deg-tree}
    There exists a positive constant $L(\eps)$ such that a.a.s.\ for every $v \in V$ we have $|N_{G}^{2}(v) \cap D| < L$.
  \end{claim}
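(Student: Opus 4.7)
The plan is to follow the template of Claim~\ref{cl:no-small-pm-deg-tree}, with the Bernoulli $(\mu/4)$-sampling that produces $G_q$ from $G^{-}$ playing the role that the random equipartition played there. I condition on the random graph process and on the adversarial $H$, so that $G_m$, $G$, $G^{-}$, $G^{+}$ and $H$ are fixed and satisfy all the a.a.s.\ properties already established (in particular \eqref{eq:gq-edge-concentration} and the Chernoff estimate $\Pr[u \in D] \le n^{-2\gamma}$ for each $u \in V \setminus \ATYP$). The remaining randomness is then the independent Bernoulli sampling; this is legitimate because the adversary sees $G$ but not $G_q$, so $G_q$ is sampled \emph{after} $H$ is chosen.

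Fix a vertex $v \in V$ and a subset $D_v \subseteq N_G^{2}(v)$ of size exactly $L$. The event that every $u \in D_v$ is degenerate is determined by the samplings of the edges of $G^{-}$ incident to the vertices of $D_v$. The key observation is that for distinct $u, u' \in D_v$ the only edge shared by their incident-edge sets is the possible edge $uu'$ itself; hence if for each $u \in D_v$ we restrict attention to edges joining $u$ to $V \setminus D_v$, the corresponding $L$ restricted samplings are \emph{mutually independent}. Discarding at most $L - 1$ edges per vertex perturbs both $\deg_{G_q}(u)$ and $\deg_{H_q}(u)$ by an additive $O(L)$ term, which only shifts the Chernoff thresholds by a constant and therefore shrinks the exponent by an arbitrarily small amount: there exists $\gamma' = \gamma'(\eps) > 0$ such that each restricted degeneracy event still has probability at most $n^{-\gamma'}$. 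Multiplying over the $L$ independent events gives
\[
  \Pr[\text{every } u \in D_v \text{ is degenerate}] \le n^{-\gamma' L}.
\]

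It remains to union bound over $(v, D_v)$. Since $G \subseteq G^{+} \sim G_{n, p_1}$ with $np_1 = O(\log n)$, a standard Chernoff bound and a union bound give $\Delta(G) \le \log^{3} n$ a.a.s., so $|N_G^{2}(v)| \le \log^{6} n$ for every $v \in V$. The expected number of pairs $(v, D_v)$ witnessing a violation of the claim is thus at most
\[
  n \cdot \binom{\log^{6} n}{L} \cdot n^{-\gamma' L} \le n \cdot (\log n)^{6L} \cdot n^{-\gamma' L} = o(1),
\]
provided $L = L(\eps)$ is chosen large enough (e.g.\ $L > 2/\gamma'$), and the claim follows.

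The main technical point is the independence step in the middle paragraph: one has to convert the Bernoulli sampling into genuinely independent events across $D_v$ without destroying the per-vertex Chernoff bound. This works precisely because $|D_v| = L$ is a constant, so trimming $O(L)$ edges per vertex perturbs the relevant degree counts negligibly compared to the threshold $nq = \Theta(\log n)$.
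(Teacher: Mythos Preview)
Your proposal is correct and follows essentially the same strategy the paper intends (the paper simply refers back to Claim~\ref{cl:no-small-pm-deg-tree}): fix a vertex $v$ and a candidate set $D_v \subseteq N_G^2(v)$ of size $L$, decouple the degeneracy events for the different $u \in D_v$ by passing to disjoint edge sets, apply Chernoff to get a per-vertex bound $n^{-\gamma'}$, multiply, and union bound over the $n \cdot (\log n)^{O(L)}$ choices. The only cosmetic difference is that Claim~\ref{cl:no-small-pm-deg-tree} achieves the decoupling via unique neighbourhoods and a single hypergeometric Chernoff on their union (because the randomness there is a random equipartition), whereas here the Bernoulli sampling lets you decouple more directly by restricting to edges leaving $D_v$; both are the same idea adapted to the respective randomness source.
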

  \begin{proof}
    The proof goes along the lines of that of Claim~\ref{cl:no-small-pm-deg-tree}. We omit the details.
  \end{proof}

  In conclusion, $G_q$ a.a.s.\ satisfies \eqref{eq:gq-edge-concentration}, every vertex in $V \setminus (\ATYP \cup D)$ has degree at least $(1 - 2\delta_a')nq$ in $G_q$ and at most $(1/2 - \eps/2)nq$ in $H_q$, and no vertex has more than $L$ vertices from $D$ in its second neighbourhood. From now on we choose one such graph $G_q$. In particular, \eqref{eq:gq-edge-concentration} implies $e(G_q) \leq (\mu/4)n^2p_0$.

  We now construct a graph $\Gamma'$ which is `almost' a backbone graph, and then convert it into a true backbone graph, thereby overcoming the issue of two tiny vertices being on a short path, as discussed previously. Let $W_1' := \TINY$, $W_2' := (\ATYP \cup D) \setminus \TINY$, and $U' := V \setminus (W_1' \cup W_2')$. Take $\Gamma'$ to be a graph on the vertex set $V$ obtained by taking all edges in $G_q[U'] - H_q[U']$ and adding some of the edges of $G - H$ incident to vertices in $W_1' \cup W_2'$ such that for every $v \in W_1'$ we have $\deg_{\Gamma'}(v) = 2$, and for every $u \in W_2$ we have $\deg_{\Gamma'}(u) \geq K - 2$ (by requiring that all $v \in W_1$ have $\deg_{\Gamma'}(v) = 2$ we potentially remove at most two incident edges to a vertex $u \in W_2'$). Note that $e(\Gamma') \leq (\mu/2)n^2 p_0$. Recall that property \ref{P-second-W1} does not necessarily hold in $\Gamma'$, thus we cannot claim it is a backbone graph. We take care of this issue as follows: let $\Gamma$ be a graph obtained from $\Gamma'$ by contracting every $uv$-path of length at most two, where $u, v \in W_1'$, and keeping exactly two edges incident to the newly obtained vertex, namely the ones incident to $u$ resp.\ $v$ in $\Gamma'$ that are not a part of the $uv$-path. We also remove all multi-edges and loops. Observe that $\Gamma$ is well-defined since there cannot exist a vertex $w \in V$ which belongs to two such paths $uv$-paths, due to property \ref{H-third-neighbourhood}.

  In order to show that $\Gamma$ is a backbone graph, we first define a witness partition $U \cup W_1 \cup W_2$. Let $X$ be the set of vertices of $\Gamma$ obtained by contractions, $Y$ be the set of all vertices in $ W_2' \cup U'$ that are inner vertices of some contracted path, and $Z$ the set of all vertices of $W_1'$ that are the endpoints of such paths. We now define
  \[
    W_1 := (W_1' \setminus Z) \cup X, \qquad W_2 := W_2' \setminus Y, \qquad \text{and} \qquad U := U' \setminus Y.
  \]
  In other words, $W_1$ consists of all newly formed vertices (obtained by contractions) as well as all remaining vertices of $W_1'$, and $W_2$ and $U$ consist of all vertices in $W_2'$ resp.\ $U'$ that are not a part of any contracted path.

  We are now ready to show that $\Gamma$ is an $(\eps/4, K/10, q)$-backbone graph. Recall that $|V| \ge n - n/\log^2 n$. For every vertex $v \in U \cup W_2$ we have
  \begin{equation}\label{eq:min-deg-contracted}
    \deg_{\Gamma}(v) \geq \deg_{\Gamma'}(v) - 1,
  \end{equation}
  by using property \ref{H-third-neighbourhood}. We now check all the requirements of Definition~\ref{def:backbone}:

  \ref{P-size-bad} follows by Lemma~\ref{lem:atyp-size} and an upper bound on the size of $D$:
  \[
    |W_1 \cup W_2| \leq |\ATYP| + |D| \leq \frac{2n}{\log^3 n} + n^{1 - \gamma} \leq \frac{n}{2 \log^2 n}.
  \]

  \ref{P-degree-bad} follows by construction. Indeed, if for any $v \in W_1$ we have $\deg_{\Gamma}(v) < 2$, then one easily checks that we arrive to a contradiction with either \ref{H-third-neighbourhood} or \ref{H-no-tiny-cycles}. The second part of the property follows from \eqref{eq:min-deg-contracted}.

  \ref{P-second-W1} also holds by construction: if there still exists a path of length at most two between two vertices from $W_1$, then this would contradict~\ref{H-third-neighbourhood}.

  \ref{P-second-neighbourhood} follows from \ref{H-third-neighbourhood}, Claim~\ref{cl:no-small-ham-deg-tree}, and our choice of $K$, since all the edges incident to the vertices in $W_1$ already exist in $\Gamma'$ and as the number of vertices from $W_2$ that are in $N_{G}^{2}(v)$ of any vertex $v$ can at most double in $\Gamma$.

  Lastly, we show the expansion properties of vertices in $U$, i.e.\ property \ref{P-expansion}. By \eqref{eq:min-deg-contracted} and the fact that $N_{\Gamma'}(S) \subseteq N_{\Gamma'}(S')$ whenever $S \subseteq S'$, it suffices to show
  \[
    |N_{\Gamma'}(S)| \ge
    \begin{cases}
      |S| \sqrt{nq} + |S|, &\text{if } |S| < K/(10q),\\
      (1/2 + \eps/8)n + |S|, &\text{if } |S| = \floor{K/(10q)}.
    \end{cases}
  \]
  For every $v \in U$ we have
  \begin{align*}
    \deg_{\Gamma'}(v, U) &= \deg_{G_q}(v) - \deg_{H_q}(v) - \deg_{G_q}(v, W_1 \cup W_2) \\
    &\geq (1 - 2\delta_{a}')nq - (1/2 - \eps/2)nq - K/10 - 2 \\
    &\geq (1/2 + \eps/4)nq,
  \end{align*}
  by \ref{H-third-neighbourhood}, Claim~\ref{cl:no-small-ham-deg-tree}, and our choice of $K$ and $\delta_{a}'$. Take $S \subseteq U$ to be an arbitrary subset of size $|S| \leq K/(10q)$ and let $T := N_{\Gamma'}(S) \cap U$. From the previously obtained bound on $\deg_{\Gamma'}(v, U)$, we have
  \[
    e_{\Gamma'}(S, T) \geq |S|(1/2 + \eps/4)nq.
  \]
  Assume towards a contradiction that $|T| < 2|S|\sqrt{nq}$. Then, from $\Gamma'[U] \subseteq G_q[U]$ and \eqref{eq:gq-edge-concentration} we derive
  \[
    |S| (1/2 + \eps/4) nq \leq e_{\Gamma'}(S, T) \leq |S||T|q + c'\sqrt{|S||T|nq} \leq 2|S|^2 (nq)^{1/2} q + 2c'|S| (nq)^{3/4},
  \]
  which is a contradiction. On the other hand, if $|S| = \floor{K/(10q)}$ then, assuming $|T| < (1/2 + \eps/8)n + \eps n / 16$, again from \eqref{eq:gq-edge-concentration} we have
  \[
    |S| (1/2 + \eps/4)nq \leq e_{\Gamma'}(S, T) \leq |S||T|q + c'\sqrt{|S||T|nq} < |S|(1/2 + 3\eps/16)nq + \eps|S|nq/16,
  \]
  where the last inequality follows from our choice of $K$. We have a contradiction once again.

  To conclude, we obtained an $(\eps/4, K/10, q)$-backbone graph $\Gamma$ with the witness partition $V(\Gamma) = U \cup W_1 \cup W_2$ and at most $(\mu/2) n^2 p_0$ edges, and graphs $G_q$ and $H_q$ applicable by Lemma \ref{lem:boosters}.

  Note that, by the construction of $\Gamma$, the following is now true: if for any subset of edges $E' \subseteq E(G^{-}[U] - H)$ the graph $\Gamma + E'$ is Hamiltonian, then $G - H$ is Hamiltonian as well. Indeed, let $x_{uv} \in X$ be some vertex obtained by contracting a $uv$-path in $\Gamma'$. The two edges incident to $x_{uv}$ in $\Gamma + E'$ necessarily lie on a Hamilton cycle. Moreover, as they correspond to two edges in $\Gamma' \subseteq G - H$, one incident to $u$ and the other to $v$, by splitting every such vertex $x_{uv}$ back into the $uv$-path we obtain a Hamilton cycle in $G - H$.

  The following claim, akin to \cite[Lemma 3.4]{ben2011resilience}, allows us to complete the proof.
  \begin{claim}{\rm\cite{ben2011resilience}}\label{cl:hamiltonicity}
    If for every subset $E' \subseteq E(G^{-}[U] - H)$ of $|E'| \leq n$ edges such that $\Gamma + E'$ is not Hamiltonian there is a vertex $v \in U$ satisfying
    \[
      |N_{G}(v) \cap B_{\Gamma + E'}(v)| > \deg_{H}(v),
    \]
    then $G - H$ is Hamiltonian. \qed
  \end{claim}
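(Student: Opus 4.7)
The plan is a standard booster-iteration of rotation-extension flavour, following \cite{ben2011resilience}. The goal is to construct a nested sequence $\emptyset = E_0 \subseteq E_1 \subseteq \cdots \subseteq E_k$ of subsets of $E(G^{-}[U] - H)$ with $k \le n$, such that $\Gamma + E_k$ is Hamiltonian. Once this is done, the observation made immediately before the claim---that any $E' \subseteq E(G^{-}[U] - H)$ for which $\Gamma + E'$ is Hamiltonian yields a Hamilton cycle in $G - H$ (by splitting each contracted vertex $x_{uv}$ back into its $uv$-path in $\Gamma'$)---concludes the argument.

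The heart of the plan is the iteration step. Suppose $E_i \subseteq E(G^{-}[U] - H)$ with $|E_i| \le n$ has been produced and $\Gamma + E_i$ is not Hamiltonian. Applying the hypothesis with $E' = E_i$ yields $v \in U$ with
\[
  |N_G(v) \cap B_{\Gamma + E_i}(v)| > \deg_H(v).
\]
Since at most $\deg_H(v)$ of the $G$-neighbours of $v$ are endpoints of edges of $H$, a direct counting argument produces $u \in N_G(v) \cap B_{\Gamma + E_i}(v)$ with $\{v, u\} \in E(G) \setminus E(H)$; moreover, by a minor refinement discussed below, $u$ can be taken in $U$ with $\{v, u\} \in E(G^{-})$. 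Setting $E_{i+1} := E_i \cup \{\{v, u\}\}$ preserves $E_{i+1} \subseteq E(G^{-}[U] - H)$, and by the defining property of boosters $\Gamma + E_{i+1}$ is either Hamiltonian or has a strictly longer longest path than $\Gamma + E_i$. Since a longest path has length at most $n - 1$, the process terminates after at most $n$ iterations.

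The main (and essentially only) obstacle lies in the bookkeeping of the iteration step, namely guaranteeing that the chosen edge $\{v, u\}$ lies in $E(G^{-}[U] - H)$ rather than merely in $E(G - H)$. The $G$-neighbours of $v$ lying outside $U$ number at most $|W_1 \cup W_2| = O(n / \log^2 n)$ by property \ref{P-size-bad}, and the $G$-edges at $v$ absent from $G^{-}$ all stem from the sparse graph $G_{n, p'}$, whose contribution at any single vertex is much smaller than the slack $|N_G(v) \cap B_{\Gamma + E_i}(v)| - \deg_H(v)$ afforded by the hypothesis in the calling context. Since this slack is in fact linear in $n$ when the claim is invoked inside the proof of Proposition \ref{prop:fixed-range-ham} (Lemma \ref{lem:boosters} produces booster sets of size $(1/2 + \alpha)n$), a valid choice of $u$ is comfortably available at every step, and the iteration terminates in a Hamiltonian $\Gamma + E_k$ as required.
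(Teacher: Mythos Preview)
Your argument is the standard booster-iteration from \cite{ben2011resilience}, which is precisely what the paper cites; the paper itself supplies no proof (the \qed follows the statement directly), so there is nothing further to compare.

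You are right to flag the bookkeeping issue: the hypothesis as literally stated produces an edge $\{v,u\}\in E(G-H)$ with $u$ a booster, whereas the iteration needs $\{v,u\}\in E(G^{-}[U]-H)$ both to re-apply the hypothesis and to invoke the unrolling observation preceding the claim. Your resolution via the calling context is exactly the intended reading: the paragraph immediately after the claim verifies the stronger inequality $|N_{G^{-}}(v)\cap B_{\Gamma+E'}(v)|>np_0/2>\deg_H(v)$ (Lemma~\ref{lem:extension} is applied with $G^{-}$ in place of $G$), and Lemma~\ref{lem:boosters} places the relevant boosters in $U$, so the needed edge is automatically in $E(G^{-}[U]-H)$. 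The claim is thus to be read somewhat informally, and your proof handles it correctly.

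One minor inaccuracy that does not affect the conclusion: your bound on $|N_G(v)\setminus U|$ via \ref{P-size-bad} conflates $V(\Gamma)\setminus U = W_1\cup W_2$ with $V(G)\setminus U = W_1'\cup W_2'\cup Y$. The correct (and much stronger, constant) bound on the latter comes from \ref{H-third-neighbourhood} together with Claim~\ref{cl:no-small-ham-deg-tree}.
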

  Let $E' \subseteq E(G^{-}[U] - H)$ be a set of edges of size at most $|E'| \leq n$. It is not too difficult to see that $\Gamma + E'$ is an $(\eps/4, K/4, q)$-backbone graph. Indeed, none of the properties \ref{P-size-bad}--\ref{P-second-W1} can be violated by adding edges with both endpoints in $U$. Similarly, the expansion property \ref{P-expansion} is not affected by addition of edges. Lastly, property \ref{P-second-neighbourhood} holds as $\Gamma + E' \subseteq G^{-}[U] - H$ and by referring to \ref{H-third-neighbourhood}. By Lemma~\ref{lem:boosters} applied with $G_q$ (for $\eps/4$ as $\alpha$) we get that the set of vertices $v \in U$ such that $|B_{\Gamma + E'}(v)| \geq (1/2 + \eps/4)n$ is of size at least $(1/2 + \eps/4)n$. As $e(\Gamma + E') \leq \mu n^2p$, by Lemma~\ref{lem:extension} applied with $\eps/4$ as $\alpha$ and $G^{-}$ as $G$, there exists a vertex $v \in U$ with $|N_{G^{-}}(v) \cap B_{\Gamma + E'}(v)| > np_0/2 > \deg_{H}(v)$. Finally, Claim~\ref{cl:hamiltonicity} implies that the graph $G - H$ is Hamiltonian.
\end{proof}

Having Proposition \ref{prop:fixed-range-ham}, the proof of the following theorem and Theorem \ref{thm:main-thm-ham} are identical to the proofs of Theorem \ref{thm:stronger-resilience-pm} and Theorem \ref{thm:main-thm-pm}, using \cite[Theorem 1.1]{LS12} instead of \cite[Theorem 3.1]{SV08} to handle $m \ge C n \log n$.
\begin{theorem}\label{thm:stronger-resilience-ham}
  Let $\eps > 0$ be a constant and consider the random graph process $\{G_i\}$. There exist positive constants $\delta_{t}(\eps)$, $\delta_{a}(\eps)$, and $K(\eps)$ such that a.a.s.\ for every $m \geq \frac{1 + \eps}{6} n\log n$ we have that the $2$-core of $G_m$ is $(1/2 - \eps, \delta_t, 2, \delta_a, K)$-resilient with respect to being Hamiltonian. \qed
\end{theorem}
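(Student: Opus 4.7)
The plan is to mimic verbatim the argument used to deduce Theorem~\ref{thm:stronger-resilience-pm} from Proposition~\ref{prop:fixed-range-pm}, substituting Proposition~\ref{prop:fixed-range-ham} for its matching-counterpart and \cite[Theorem 1.1]{LS12} for \cite[Theorem 3.1]{SV08} to handle the dense regime. Essentially all the real work is already done inside Proposition~\ref{prop:fixed-range-ham}; the remainder is a two-part union bound over a constant number of ``windows'' together with a straightforward dense-regime reduction.

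More concretely, I would first fix a sufficiently large constant $C = C(\eps)$ and split the range $m \geq \frac{1+\eps}{6} n \log n$ at $C n \log n$. For the dense part $m \geq C n \log n$, a standard Chernoff computation shows that for $C$ large enough (in terms of the constants $\delta_t, \delta_a$ produced by Proposition~\ref{prop:fixed-range-ham}) the random graph $G_{n,m}$ a.a.s.\ contains no vertex of $\TINY_{p, \delta_t}$ or $\ATYP_{p, \delta_a}$; consequently the five-parameter notion of $(1/2 - \eps, \delta_t, 2, \delta_a, K)$-resilience collapses to ordinary $(1/2 - \eps)$-resilience, and the $2$-core of $G_m$ is $G_m$ itself since $\delta(G_m)$ is much larger than $2$. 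By \cite[Theorem 1.1]{LS12}, for each fixed $m$ in this range $G_{n,m}$ is $(1/2 - \eps)$-resilient with respect to Hamiltonicity with probability at least $1 - e^{-\alpha C \log n}$ for some $\alpha = \alpha(\eps) > 0$. Choosing $C$ so large that $\alpha C \geq 3$ and union-bounding over the at most $\binom{n}{2}$ values of $m$ gives the statement simultaneously for all such $m$ a.a.s.

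For the sparse part $\frac{1+\eps}{6} n \log n \leq m \leq C n \log n$, I would cover the interval by $O_{\eps}(1)$ sub-intervals of geometric form
\[
  \bigg[\Big(1 + \tfrac{\eps}{6}\Big)^{i-1} \cdot \tfrac{1+\eps}{6} n \log n, \; \Big(1 + \tfrac{\eps}{6}\Big)^{i} \cdot \tfrac{1+\eps}{6} n \log n\bigg),
\]
for $i = 1, \ldots, C_{\eps}'$, where $C_{\eps}'$ is chosen so that the last interval reaches $C n \log n$. Proposition~\ref{prop:fixed-range-ham} applied with starting point $m_0$ equal to the left endpoint of each sub-interval yields, with probability $1 - o(1)$, the desired $(1/2 - \eps, \delta_t, 2, \delta_a, K)$-resilience of the $2$-core of $G_m$ for every $m$ inside that window. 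A union bound over the constantly many windows then delivers the conclusion. The overall constants $\delta_t, \delta_a, K$ of the theorem are taken to be the worst ones (smallest $\delta$'s, largest $K$) among those produced by Proposition~\ref{prop:fixed-range-ham} across the finitely many applications.

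The main obstacle here is essentially nil, since everything nontrivial is encapsulated in Proposition~\ref{prop:fixed-range-ham}. The only small bookkeeping points are (i) verifying that in the dense regime the stronger resilience coincides with the ordinary one and that the $2$-core equals the whole graph, both of which follow immediately from the fact that $\TINY_{p,\delta_t}(G_m) = \ATYP_{p,\delta_a}(G_m) = \varnothing$ a.a.s.\ once $p \geq C \log n / n$ with $C$ large, and (ii) ensuring that the geometric cover uses only $O_\eps(1)$ windows so that the union bound costs nothing, which is immediate since each window enlarges the edge count by a factor $1 + \eps/6$.
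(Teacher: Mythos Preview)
Your proposal is correct and takes essentially the same approach as the paper, which itself simply states that the proof is identical to that of Theorem~\ref{thm:stronger-resilience-pm} with Proposition~\ref{prop:fixed-range-ham} in place of Proposition~\ref{prop:fixed-range-pm} and \cite[Theorem 1.1]{LS12} in place of \cite[Theorem 3.1]{SV08}. The only cosmetic difference is that you cover the sparse range with geometric windows while the paper (in the matching case) uses arithmetic ones; both yield $O_\eps(1)$ windows, so this is immaterial.
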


\subsection{Proof of Lemma~\ref{lem:boosters} and Lemma~\ref{lem:extension}}\label{sec:rotation-extension}

Let us first give a brief outline of how to apply P\'{o}sa's rotation-extension technique to backbone graphs with the goal of constructing long paths.

Assume $\Gamma$ is a backbone graph and let $P = v_0, \ldots, v_\ell$ be a path in $\Gamma$. If $\{v_0, v_\ell\}$ is an edge in $\Gamma$ then such a path can be closed into a cycle. If the obtained cycle does not cover all vertices, then one easily checks that properties \ref{P-degree-bad}--\ref{P-expansion} imply connectivity of $\Gamma$ and we can thus {\em extend} $P$ into a path longer than $P$.

Suppose that $P$ cannot be extended and let $\{v_0, v_i\}$ be an edge in $\Gamma$ for some $2 \leq i \leq \ell - 1$. Then the path $P' = v_{i - 1}, \ldots, v_0, v_i, \ldots, v_\ell$ is another path in $\Gamma$ of the same length $\ell$. We say that $P'$ is obtained from $P$ by a {\em rotation} around the {\em endpoint} $v_0$, with {\em pivot point} $v_i$, and {\em broken edge} $\{v_{i - 1}, v_i\}$ (see, Figure~\ref{fig:rotation}). Observe that by performing a rotation we can now possibly obtain a cycle by adding the edge $\{v_{i - 1}, v_\ell\}$ as well. Otherwise, we perform more rotations to obtain more boosters. The rotation is repeated until we find a closing edge in $\Gamma$.
\begin{figure}[!htbp]
  \centering
  \begin{subfigure}{0.47\textwidth}
    \centering
    \includegraphics[scale=0.625]{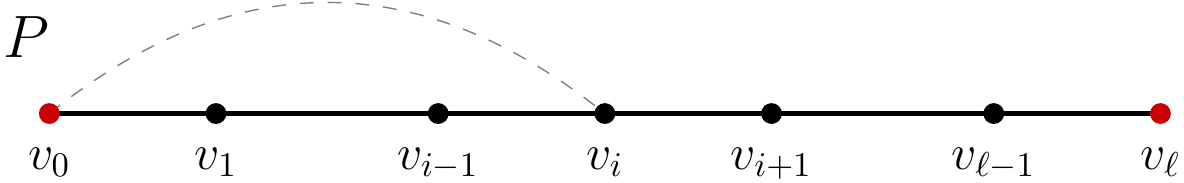}
  \end{subfigure}
  \hspace{1em}
  \begin{subfigure}{0.47\textwidth}
    \centering
    \includegraphics[scale=0.625]{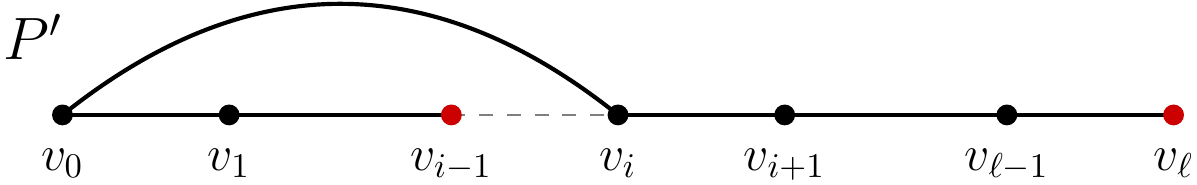}
  \end{subfigure}
  \caption{Rotation of the the path $P$ around the fixed endpoint $v_0$, with pivot point $v_i$, and the broken edge $\{v_{i - 1}, v_i\}$. Pairs of red vertices denote the endpoints of the paths, i.e.\ boosters.}
  \label{fig:rotation}
\end{figure}

In our setting, we have to be careful with vertices of low degree. This is why we need properties \ref{P-degree-bad}--\ref{P-second-neighbourhood}. We illustrate in particular why we need
property \ref{P-second-W1}. Suppose $P = v_0, \ldots, v_\ell$ is a longest path in $\Gamma$ such that $v_0, v_{i - 1} \in W_1$ for some $v_{i}$ which is the only neighbour (other than $v_1$) of $v_0$ on the path $P$. Rotating around $v_0$ with pivot point $v_{i}$ yields a path $P' = v_{i - 1}, \ldots, v_0, v_{i}, \ldots, v_\ell$. Now, the only rotation that can be performed around $v_{i - 1}$ brings the path $P'$ back to where we started from. In conclusion, in such a situation we cannot prove that $\Gamma$ contains many boosters. Property \ref{P-second-W1} guarantees that this cannot occur.
\begin{proof}[Proof of Lemma~\ref{lem:boosters}]
  Let $P = v_0, \ldots, v_\ell$ be a longest path in $\Gamma$. For a subset of vertices $Z \subseteq V(P)$ we write $Z^{+} := \{v_{i + 1} \colon v_i \in Z \}$ and $Z^{-} := \{v_{i - 1} \colon v_i \in Z\}$. For a vertex $z$ we abbreviate $\{z\}^{+}$ to $z^{+}$ and $\{z\}^{-}$ to $z^{-}$.

  We first show that there exists a longest path with both endpoints in $U$. Suppose $v_0 \in W_2$. Then by \ref{P-degree-bad} and \ref{P-second-neighbourhood} we know that there are at least $K$ neighbours $u \in N_{\Gamma}(v)$ on the path $P$ which have $u^{-} \in U$. Thus, by performing a single rotation around $v_0$ with pivot point $u$ we get a path $P'$ with an endpoint in $U$. Similarly, if $v_0 \in W_1$ its only other neighbour on the path $u$ has $u^{-}$ belonging to either $U$ or $W_2$, by \ref{P-second-W1}. In any case, by performing at most two rotations we reach a path $P'$ with an endpoint in $U$. Repeating the same argument for $v_\ell$ yields the claimed path $P^*$ with both endpoints in $U$.

  \textbf{Phase 1: Initial rotations}

  The first phase consists of repeatedly rotating the first endpoint as in \cite[Lemma 3.2~{\bf Step 1}]{LS12} in order to obtain a set of at least $n/4$ new endpoints of paths of length $\ell$ that all contain the same vertices and all end in $v_\ell$. Compared to \cite{LS12}, the only difference in our argument is that we ignore all pivot points $u$ with at least one of $u$, $u^{+}$, and $u^{-}$ not belonging to $U$. As every vertex can have at most $K$ such neighbours $u$, one easily sees that all the calculations of \cite{LS12} essentially remain the same. Indeed, starting with $X_0 = \{v_0\}$ and denoting by $X_i$ the set of endpoints obtained by exactly $i$ rotations, we get
  \[
    |X_{i + 1}| \geq \frac{1}{2} \Big( |N_{\Gamma}(X_{i})| - K|X_{i}| - 3 \sum_{j = 0}^{i} |X_i| \Big).
  \]
  Using identical calculations as in \cite{LS12} yields $|X_{i + 1}| \geq (nq / 4)^{(i + 1)/2}$, for all $i \geq 0$ with $|X_{i}| \leq K/q$. Once we reach a set $|X_m| = \max\{1, \floor{K/q}\}$, which is easily observed to happen after at most $O(\log n/\log\log n)$ many steps, the above bound on the size of $X_{i + 1}$, together with property \ref{P-expansion}, immediately implies $|X_{m + 1}| \geq n/4$.

  \textbf{Phase 2: Terminal rotation}

  At the end of the first phase we have a set $X$ of at least $n/4$ possible endpoints. Let $Y$ denote the set of endpoints that can be generated by exactly one more rotation starting from $X$.

  In \cite[Lemma 3.2~{\bf Step 2}]{LS12} it is shown that by partitioning the path $P$ into appropriately many intervals, denoted by $P_i$, one can define pairs of vertex sets $(X_i, Y_i)$ such that there is no edge between $X_i$ and $Y_i$ in $G_q - H_q$, for every $i$. From Lemma~\ref{lem:gnp-edge-distribution} we know lower bounds on the number of edges in $E_{G_q}(X_i,Y_i)$ and thus all these need to belong to $H_q$. In \cite[Lemma 3.2~{\bf Step 2}]{LS12} it is shown that $|Y| < (1/2 + \alpha)n$ implies a contradiction to the degree assumption of $H_q$. The only difference in our case is that we again need to ignore all pivot points $v^{+} \in P_i$ with $v \notin U$ (which should belong to $Y_i$ now), and similarly $v^{-}$. However, as every interval $P_i$ (in line 4 of the proof of {\bf Step 2}) is of size at least $n (\log\log n)^{1/2}/(4 \log n)$ and by \ref{P-size-bad} each interval $P_i$ has at most $o(|P_i|)$ such `bad' vertices, we can simply ignore them since this contributes $o(n^2q)$ many edges. All remaining calculations from \cite[Lemma 3.2]{LS12} remain the same and we obtain a set of new endpoints of size at least $(1/2 + \alpha)n$.

  \textbf{Phase 3: Rotating the other endpoint}

  Exactly as in \cite[Lemma 3.2~{\bf Step 3}]{LS12} for every of the newly obtained $(1/2 + \alpha)n$ endpoints, we analogously rotate the other endpoint to obtain the intended result.
\end{proof}

We wrap-up by giving a proof of Lemma~\ref{lem:extension}.
\begin{proof}[Proof of Lemma~\ref{lem:extension}]
  Consider a graph $\Gamma$ with at most $\mu n^2 p$ edges and a subset $U \subseteq V(G)$ such that $\Gamma[U] \subseteq G$ and
  \[
    A := \{ v \in U \colon |B_{\Gamma}(v) \cap U| \geq (1/2 + \alpha)n \}
  \]
  is of size $|A| \geq (1/2 + \alpha)n$. Take $A' \subseteq A$ to be a set of size $|A'| = \alpha n/2$, and for every $v \in A'$ let $B'(v) := (B_{\Gamma'}(v) \cap U) \setminus A'$ and note that $|B'(v)| \geq (1/2 + \alpha/2)n$. By Chernoff bounds we have that the probability of a fixed vertex $v \in A'$ having fewer than $np/2$ neighbours inside of the set $B'(v)$ in the graph $G$ is at most $e^{-\Omega_{\alpha}(np)}$. Since $A'$ is disjoint from all the sets $B'(v)$, these events are independent for different vertices $v, u \in A'$. Therefore, the probability that no vertex $v \in A'$ has more than $np/2$ neighbours inside its respective set $B'(v)$ in $G$ is bounded by $e^{-\Omega_{\alpha}(n^2p)}$. Hence, the probability of failure is upper bounded by
  \[
    \Pr \leq e^{-\Omega_{\alpha}(n^2 p)} \cdot \sum_{e(\Gamma') \leq \mu n^2 p} \Pr[\Gamma' \subseteq G].
  \]
  The probability for a fixed graph with $t$ edges to be a subgraph of $G$ is $p^{t}$ and thus by a union bound over all such graphs, we get
  \[
    \Pr \leq e^{-\Omega_{\alpha}(n^2 p)} \cdot \sum_{t = 1}^{\mu n^2 p} \binom{\binom{n}{2}}{t} p^{t} \leq e^{-\Omega_{\alpha}(n^2 p)} \sum_{t = 1}^{\mu n^2p} \left( \frac{e n^2p}{t} \right)^{t}.
  \]
  One easily sees that for $0 < \mu \leq 1$ the right hand side is increasing for $1 \leq t \leq \mu n^2p$ and hence we may substitute $t = \mu n^2p$ to conclude
  \[
    \Pr \leq e^{-\Omega_{\alpha}(n^2 p)} \cdot (\mu n^2 p) \cdot \left( \frac{e}{\mu} \right)^{\mu n^2p} = e^{-\Omega_{\alpha}(n^2 p)} \cdot e^{O(\mu \log{(1/\mu)} n^2 p)} = o(1),
  \]
  for sufficiently small $\mu$ depending on $\alpha$.
\end{proof}

\paragraph*{Acknowledgements.} The third author would like to thank Michael
Krivelevich for directing his attention to \cite{krivelevich2015long} which
helped in making some of the arguments cleaner.

\bibliographystyle{abbrv}
\bibliography{references}

\end{document}